\newtheorem{theorem}{Theorem}
\theoremstyle{plain}
\newtheorem{corollary}{Corollary}
\newtheorem{definition}{Definition}
\newtheorem{example}{Example}
\newtheorem{proposition}{Proposition}
\newtheorem{remark}{Remark}
\numberwithin{equation}{section}
\begin{document}
\title[Symmetrized $p$-convexity]{Symmetrized $p$-convexity and Related Some
Integral Inequalities}
\author{\.{I}mdat \.{I}\c{s}can}
\address{Department of Mathematics, Faculty of Arts and Sciences, Giresun
University, 28200, Giresun, Turkey}
\email{imdat.iscan@giresun.edu.tr, imdati@yahoo.com}
\subjclass[2000]{ 26A33, 26A51, 26D15. }
\keywords{Symmetrized convex function, symmetrized GA-convex function,
Hermite-Hadamard type inequalities, }

\begin{abstract}
In this paper, the author introduces the concept of the symmetrized $p$%
-convex function, gives Hermite-Hadamard type inequalities for symmetrized $p
$-convex functions.
\end{abstract}

\maketitle

\section{Introduction}

Let real function $f$ be defined on some nonempty interval $I$ of real line $%
\mathbb{R}
$. The function $f$ is said to be convex on $I$ if inequality%
\begin{equation}
f(tx+(1-t)y)\leq tf(x)+(1-t)f(y)  \label{0-1}
\end{equation}%
holds for all $x,y\in I$ and $t\in \left[ 0,1\right] .$

In \cite{I14}, the author, gave definition Harmonically convex and concave
functions as follow.

\begin{definition}
Let $I\subset 
\mathbb{R}
\backslash \left\{ 0\right\} $ be a real interval. A function $%
f:I\rightarrow 
\mathbb{R}
$ is said to be harmonically convex, if \ 
\begin{equation}
f\left( \frac{xy}{tx+(1-t)y}\right) \leq tf(y)+(1-t)f(x)  \label{0-2}
\end{equation}%
for all $x,y\in I$ and $t\in \lbrack 0,1]$. If the inequality in (\ref{0-2})
is reversed, then $f$ is said to be harmonically concave.
\end{definition}

The following result of the Hermite-Hadamard type holds for harmonically
convex functions.

\begin{theorem}[\protect\cite{I14}]
\label{1.3}Let $f:I\subset 
\mathbb{R}
\backslash \left\{ 0\right\} \rightarrow 
\mathbb{R}
$ be a harmonically convex function and $a,b\in I$ with $a<b.$ If $f\in
L[a,b]$ then the following inequalities hold 
\begin{equation}
f\left( \frac{2ab}{a+b}\right) \leq \frac{ab}{b-a}\dint\limits_{a}^{b}\frac{%
f(x)}{x^{2}}dx\leq \frac{f(a)+f(b)}{2}.  \label{0-3}
\end{equation}%
The \ above inequalities are sharp.
\end{theorem}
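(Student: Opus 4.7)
The plan is to convert the middle integral into a standard $[0,1]$ integral by a substitution that turns the measure $\frac{du}{u^{2}}$ into $dt$, and then to apply the harmonic convexity inequality in two different ways to get the upper and lower bounds. Specifically, I would set $u=\frac{ab}{ta+(1-t)b}$, which is a bijection $[0,1]\to[a,b]$ with $\frac{du}{u^{2}}=\frac{b-a}{ab}\,dt$, so that
\begin{equation*}
\frac{ab}{b-a}\int_{a}^{b}\frac{f(u)}{u^{2}}\,du=\int_{0}^{1}f\!\left(\frac{ab}{ta+(1-t)b}\right)dt.
\end{equation*}
This identity reduces the Hermite--Hadamard statement to estimating the right-hand $[0,1]$-integral from above and below.

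For the right-hand inequality in (\ref{0-3}), I would apply Definition 1 to the points $x=a$, $y=b$ to obtain $f\!\left(\frac{ab}{ta+(1-t)b}\right)\leq tf(b)+(1-t)f(a)$, and then integrate over $t\in[0,1]$; the right-hand side integrates to $\frac{f(a)+f(b)}{2}$, giving the desired bound.

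For the left-hand inequality, I would use the harmonic midpoint trick. Set $x_{t}=\frac{ab}{ta+(1-t)b}$ and $y_{t}=\frac{ab}{(1-t)a+tb}$; a short algebraic computation shows that $\frac{x_{t}y_{t}}{\tfrac{1}{2}x_{t}+\tfrac{1}{2}y_{t}}=\frac{2ab}{a+b}$ for every $t$. Applying (\ref{0-2}) with parameter $\tfrac{1}{2}$ to the pair $(x_{t},y_{t})$ yields $f\!\left(\frac{2ab}{a+b}\right)\leq \tfrac{1}{2}f(x_{t})+\tfrac{1}{2}f(y_{t})$. Integrating over $[0,1]$ and noting that the substitution $t\mapsto 1-t$ maps the $y_{t}$-integral onto the $x_{t}$-integral, both averages equal $\frac{ab}{b-a}\int_{a}^{b}\frac{f(x)}{x^{2}}dx$, which yields the left-hand inequality.

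The routine piece is the substitution and the bookkeeping; the only genuinely non-obvious step is recognizing the pairing $(x_{t},y_{t})$ whose harmonic combination is exactly $\frac{2ab}{a+b}$, which is what makes the midpoint form of harmonic convexity applicable inside the integral. Sharpness can be established at the end by observing that equality holds throughout for $f\equiv\text{const}$ (or, more informatively, for affine functions of $1/x$), confirming that the constants in (\ref{0-3}) cannot be improved.
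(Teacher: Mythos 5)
Your proposal is correct. The paper states Theorem \ref{1.3} only as a quoted result from \cite{I14} and supplies no proof, but the argument you give is precisely the standard one from that reference: the substitution $u=\frac{ab}{ta+(1-t)b}$ (whose Jacobian computation $\frac{du}{u^{2}}=\frac{b-a}{ab}\,dt$ is right), the endpoint application of (\ref{0-2}) for the upper bound, and the midpoint pairing $x_{t},y_{t}$ with harmonic mean $\frac{2ab}{a+b}$ for the lower bound; the sharpness remark via functions affine in $1/x$ is also valid.
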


In \cite{I16}, the author gave the definition of $p$-convex function as
follow:

\begin{definition}[\protect\cite{I16}]
\label{0.2}Let $I\subset \left( 0,\infty \right) $ be a real interval and $%
p\in 
\mathbb{R}
\backslash \left\{ 0\right\} .$ A function $f:I\rightarrow 
\mathbb{R}
$ is said to be a p-convex function, if%
\begin{equation}
f\left( \left[ tx^{p}+(1-t)y^{p}\right] ^{1/p}\right) \leq tf(x)+(1-t)f(y)
\label{0-4}
\end{equation}%
for all $x,y\in I$ and $t\in \lbrack 0,1]$. If the inequality in (\ref{0-4})
is reversed, then $f$ is said to be $p$-concave.
\end{definition}

According to Definition \ref{0.2}, It can be easily seen that for $p=1$ and $%
p=-1$, $p$-convexity reduces to ordinary convexity and harmonically
convexity of functions defined on $I\subset \left( 0,\infty \right) $,
respectively.

Since the condition (\ref{0-4}) can be written as%
\begin{equation*}
\left( f\circ g\right) \left( tx^{p}+(1-t)y^{p}\right) \leq t\left( f\circ
g\right) (x^{p})+(1-t)\left( f\circ g\right) (y^{p}),\ g(x)=x^{1/p},
\end{equation*}%
then we observe that $f:I\subseteq \left( 0,\infty \right) \rightarrow 
\mathbb{R}
$ is $p$-convex on $I$ if and only if $f\circ g$ is convex on $%
g^{-1}(I):=\{z^{p},z\in I\}$ .

\begin{example}
Let $f:\left( 0,\infty \right) \rightarrow 
\mathbb{R}
,\ f(x)=x^{p},p\neq 0,$ and $g:\left( 0,\infty \right) \rightarrow 
\mathbb{R}
,\ g(x)=c,~\ c\in 
\mathbb{R}
,$ then $f$ and $g$ are both $p$-convex and $p$-concave functions.
\end{example}

In \cite[Theorem 5]{FS14}, if we take $I\subset \left( 0,\infty \right) $, $%
p\in 
\mathbb{R}
\backslash \left\{ 0\right\} $ and $h(t)=t$ , then we have the following
Theorem.

\begin{theorem}
\label{2.2}Let $f:I\subseteq \left( 0,\infty \right) \rightarrow 
\mathbb{R}
$ be a $p$-convex function, $p\in 
\mathbb{R}
\backslash \left\{ 0\right\} $, and $a,b\in I$ with $a<b.$ If $f\in L[a,b]$
then we have 
\begin{equation}
f\left( \left[ \frac{a^{p}+b^{p}}{2}\right] ^{1/p}\right) \leq \frac{p}{%
b^{p}-a^{p}}\dint\limits_{a}^{b}\frac{f(x)}{x^{1-p}}dx\leq \frac{f(a)+f(b)}{2%
}.  \label{0-5}
\end{equation}
\end{theorem}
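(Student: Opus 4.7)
The plan is to reduce the claim to the classical Hermite--Hadamard inequality via the composition observation already recorded in the excerpt: $f:I\subseteq(0,\infty)\to\mathbb{R}$ is $p$-convex if and only if $F:=f\circ g$ is convex on $g^{-1}(I)=\{z^{p}:z\in I\}$, where $g(x)=x^{1/p}$. Thus $F$ is a convex real-valued function on an interval whose endpoints are $a^{p}$ and $b^{p}$, and the classical Hermite--Hadamard inequality for convex functions applies directly to $F$.

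First I would write down the classical inequality for $F$ on the interval $[A,B]$ with $A=\min(a^{p},b^{p})$ and $B=\max(a^{p},b^{p})$:
\begin{equation*}
F\!\left(\tfrac{A+B}{2}\right)\leq \tfrac{1}{B-A}\int_{A}^{B}F(u)\,du\leq \tfrac{F(A)+F(B)}{2}.
\end{equation*}
Note that $\tfrac{A+B}{2}=\tfrac{a^{p}+b^{p}}{2}$, so the left endpoint already matches: $F\!\left(\tfrac{a^{p}+b^{p}}{2}\right)=f\!\left(\bigl[\tfrac{a^{p}+b^{p}}{2}\bigr]^{1/p}\right)$. Also $\{F(A),F(B)\}=\{f(a),f(b)\}$, so the right-hand side equals $\tfrac{f(a)+f(b)}{2}$.

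Next I would transform the middle integral back to an integral over $[a,b]$ by the substitution $u=x^{p}$, $du=px^{p-1}\,dx$. Formally,
\begin{equation*}
\int_{a^{p}}^{b^{p}}F(u)\,du=\int_{a^{p}}^{b^{p}}f(u^{1/p})\,du=\int_{a}^{b}f(x)\,px^{p-1}\,dx=p\int_{a}^{b}\frac{f(x)}{x^{1-p}}\,dx.
\end{equation*}
Combining this with $B-A=|b^{p}-a^{p}|$ should produce the factor $\tfrac{p}{b^{p}-a^{p}}$ in the middle term of (\ref{0-5}).

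The one point requiring care, and in my view the only real obstacle, is the sign of $p$: when $p>0$ we have $a^{p}<b^{p}$, so $A=a^{p}$ and the substitution preserves orientation, and the factor is $\tfrac{p}{b^{p}-a^{p}}>0$; when $p<0$ we have $a^{p}>b^{p}$, so $A=b^{p}$, and the substitution reverses orientation, producing an extra sign from $du=px^{p-1}dx$ with $p<0$ together with the flip $B-A=a^{p}-b^{p}=-(b^{p}-a^{p})$. The two sign flips cancel, and one again obtains exactly $\tfrac{p}{b^{p}-a^{p}}$ as the coefficient (which is positive, since numerator and denominator share the sign of $p$). Once this bookkeeping is done, the two claimed inequalities follow immediately from the classical Hermite--Hadamard inequality applied to the convex function $F$.
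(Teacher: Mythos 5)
Your proof is correct. The paper itself does not prove this theorem: it is obtained by specializing \cite[Theorem 5]{FS14} (the $(p,h)$-convex Hermite--Hadamard inequality with $h(t)=t$), so strictly speaking your argument is a different, self-contained route. That said, the mechanism you use --- the equivalence ``$f$ is $p$-convex iff $F=f\circ g$ is convex on $g^{-1}(I)$ with $g(x)=x^{1/p}$,'' followed by the classical Hermite--Hadamard inequality for $F$ on the interval with endpoints $a^{p},b^{p}$ and the substitution $u=x^{p}$ --- is exactly the reduction the paper records after Definition \ref{0.2} and deploys in its remarks on Theorems \ref{T2-1} and \ref{T2-2}, so the two approaches are morally the same. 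Your handling of the only delicate point, the orientation reversal when $p<0$, is right: the sign from $du=px^{p-1}\,dx$ and the sign from $B-A=-(b^{p}-a^{p})$ cancel, leaving the positive coefficient $\frac{p}{b^{p}-a^{p}}$ in all cases. The only thing I would add for completeness is the one-line observation that $f\in L[a,b]$ with $[a,b]\subset(0,\infty)$ guarantees $F\in L$ on the transformed interval (since $x^{p-1}$ is bounded above and below by positive constants on $[a,b]$), so the classical inequality is indeed applicable to $F$.
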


\begin{definition}[\protect\cite{KI18}]
Let $p\in 
\mathbb{R}
\backslash \left\{ 0\right\} .$ A function $w:\left[ a,b\right] \subseteq
\left( 0,\infty \right) \rightarrow 
\mathbb{R}
$ is said to be p-symmetric with respect to $\left[ \frac{a^{p}+b^{p}}{2}%
\right] ^{1/p}$ if%
\begin{equation*}
w(x)=w\left( \left[ a^{p}+b^{p}-x^{p}\right] ^{1/p}\right)
\end{equation*}%
holds for all $x\in \left[ a,b\right] .$
\end{definition}

In \cite{KI17}, Kunt and \.{I}\c{s}can gave Hermite-Hadamard-Fej\'{e}r type
inequalities for $p$-convex functions as follow:

\begin{theorem}
\label{T-0}Let $f:I\subseteq \left( 0,\infty \right) \rightarrow 
\mathbb{R}
$ be a $p$-convex function, $p\in 
\mathbb{R}
\backslash \left\{ 0\right\} ,a,b\in I$ with $a<b.$ If $f\in L[a,b]$ and $%
w:[a,b]\rightarrow 
\mathbb{R}
$ is nonnegative, integrable and $p$-symmetric with respect to $\left[ \frac{%
a^{p}+b^{p}}{2}\right] ^{1/p}$, then the following inequalities hold:%
\begin{eqnarray}
&&f\left( \left[ \frac{a^{p}+b^{p}}{2}\right] ^{1/p}\right)
\dint\limits_{a}^{b}\frac{w(x)}{x^{1-p}}dx\leq \dint\limits_{a}^{b}\frac{%
f(x)w(x)}{x^{1-p}}dx  \notag \\
&\leq &\frac{f(a)+f(b)}{2}\dint\limits_{a}^{b}\frac{w(x)}{x^{1-p}}dx.
\label{0-5-a}
\end{eqnarray}
\end{theorem}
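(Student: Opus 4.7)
The plan is to pull both inequalities back to the parameter interval $[0,1]$ via the $p$-substitution $x = \varphi(t) := \left[(1-t)a^p + tb^p\right]^{1/p}$, under which a direct computation shows $\frac{dx}{x^{1-p}} = \frac{b^p-a^p}{p}\,dt$ (the sign of $b^p-a^p$ matches the sign of $p$, so this factor is positive). Consequently, for any integrable $g$ on $[a,b]$,
\begin{equation*}
\int_a^b \frac{g(x)}{x^{1-p}}\,dx \;=\; \frac{b^p-a^p}{p}\int_0^1 g(\varphi(t))\,dt.
\end{equation*}
Setting $W(t) := w(\varphi(t))$, the $p$-symmetry of $w$ about $[(a^p+b^p)/2]^{1/p}$ translates to the ordinary reflection symmetry $W(1-t)=W(t)$, because $\varphi(1-t)^p = a^p+b^p-\varphi(t)^p$. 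In particular, the change of variables $t\mapsto 1-t$ gives $\int_0^1 (1-t)W(t)\,dt = \int_0^1 tW(t)\,dt = \tfrac{1}{2}\int_0^1 W(t)\,dt$; this will be the key identity behind both bounds.

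For the right-hand inequality, I would apply $p$-convexity to the pair $(a,b)$ with weight $1-t$ to obtain $f(\varphi(t)) \leq (1-t)f(a)+tf(b)$, multiply by the nonnegative $W(t)$, integrate over $[0,1]$, and collapse the right side to $\tfrac{f(a)+f(b)}{2}\int_0^1 W(t)\,dt$ via the symmetry identity above. Translating back through the substitution yields
\begin{equation*}
\int_a^b \frac{f(x)w(x)}{x^{1-p}}\,dx \;\leq\; \frac{f(a)+f(b)}{2}\int_a^b \frac{w(x)}{x^{1-p}}\,dx .
\end{equation*}

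For the left-hand inequality, observe that $\bigl[\tfrac{\varphi(t)^p+\varphi(1-t)^p}{2}\bigr]^{1/p} = \bigl[\tfrac{a^p+b^p}{2}\bigr]^{1/p}$ for every $t\in[0,1]$, so $p$-convexity applied at parameter $\tfrac{1}{2}$ to the pair $\varphi(t), \varphi(1-t)$ gives
\begin{equation*}
f\!\left(\left[\tfrac{a^p+b^p}{2}\right]^{1/p}\right) \;\leq\; \tfrac{1}{2}\bigl[f(\varphi(t))+f(\varphi(1-t))\bigr].
\end{equation*}
Multiplying by $W(t)$, integrating over $[0,1]$, and then using the change $t\mapsto 1-t$ on the $f(\varphi(1-t))$-term together with $W(1-t)=W(t)$ merges the two contributions into the single integral $\int_0^1 f(\varphi(t))W(t)\,dt$; undoing the original substitution produces the lower bound. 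The only conceptual step is recognising that the $p$-substitution converts $p$-symmetry of $w$ into ordinary symmetry of $W$ about $t=\tfrac{1}{2}$, after which the argument is the standard Fej\'er-type manipulation and no substantive obstacle arises.
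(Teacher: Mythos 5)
Your argument is correct. Note first that the paper itself states Theorem \ref{T-0} without proof, quoting it from the reference of Kunt and \.{I}\c{s}can, so there is no in-paper proof to match; the closest internal derivation is the chain Theorem \ref{T2-2} $\Rightarrow$ Corollary \ref{C2-1}, whose inequality (\ref{2-3-2}) recovers exactly (\ref{0-5-a}). Your route is the classical Fej\'er parametrization: pull everything back to $[0,1]$ via $x=\varphi(t)=[(1-t)a^p+tb^p]^{1/p}$, check $\frac{dx}{x^{1-p}}=\frac{b^p-a^p}{p}\,dt$ with a positive factor in both sign cases of $p$, observe that $p$-symmetry of $w$ becomes $W(1-t)=W(t)$, and run the standard weight-averaging argument; all of these steps check out, including the identities $\varphi(1-t)^p=a^p+b^p-\varphi(t)^p$ and $\int_0^1(1-t)W\,dt=\int_0^1 tW\,dt$. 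The paper's alternative keeps the argument in the $x$-variable: it first proves the pointwise sandwich $f\bigl(\bigl[\tfrac{a^p+b^p}{2}\bigr]^{1/p}\bigr)\le P_{(f;p)}(x)\le\tfrac{f(a)+f(b)}{2}$, multiplies by $w(x)/x^{1-p}\ge 0$, integrates, and then uses the substitution $x\mapsto[a^p+b^p-x^p]^{1/p}$ together with the $p$-symmetry of $w$ to replace $\int_a^b w\,P_{(f;p)}/x^{1-p}$ by $\int_a^b wf/x^{1-p}$. The two proofs use the same reflection symmetry, just in different coordinates; yours is self-contained and elementary, while the paper's buys the stronger conclusion that (\ref{0-5-a}) holds for the larger class of symmetrized $p$-convex functions, since it only ever invokes $p$-convexity of the transform $P_{(f;p)}$ rather than of $f$ itself.
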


\begin{definition}
Let $f\in L\left[ a,b\right] $. The left-sided and right-sided Hadamard
fractional integrals $J_{a+}^{\alpha }f$ and $J_{b-}^{\alpha }f$ of oder $%
\alpha >0$ with $b>a\geq 0$ are defined by

\begin{equation*}
J_{a+}^{\alpha }f(x)=\frac{1}{\Gamma (\alpha )}\dint\limits_{a}^{x}\left(
x-t\right) ^{\alpha -1}f(t)dt,\ a<x<b
\end{equation*}

and

\begin{equation*}
J_{b-}^{\alpha }f(x)=\frac{1}{\Gamma (\alpha )}\dint\limits_{x}^{b}\left(
t-x\right) ^{\alpha -1}f(t)dt,\ a<x<b
\end{equation*}%
respectively, where $\Gamma (\alpha )$ is the Gamma function defined by $%
\Gamma (\alpha )=$ $\dint\limits_{0}^{\infty }e^{-t}t^{\alpha -1}dt$ (see 
\cite{KST06}).
\end{definition}

In \cite{KI18}, the authors presented Hermite--Hadamard-Fejer inequalities
for $p$-convex functions in fractional integral forms as follows:

\begin{theorem}
\label{T-1}Let $f:I\subseteq \left( 0,\infty \right) \rightarrow 
\mathbb{R}
$ be a $p$-convex function, $p\in 
\mathbb{R}
\backslash \left\{ 0\right\} ,\alpha >0$ and $a,b\in I$ with $a<b.$ If $f\in
L[a,b]$ and $w:[a,b]\rightarrow 
\mathbb{R}
$ is nonnegative, integrable and $p$-symmetric with respect to $\left[ \frac{%
a^{p}+b^{p}}{2}\right] ^{1/p}$, then the following inequalities for
fractional integrals hold:

i.) If $p>0$%
\begin{eqnarray}
&&f\left( \left[ \frac{a^{p}+b^{p}}{2}\right] ^{1/p}\right) \left[
J_{a^{p}+}^{\alpha }\left( w\circ g\right) (b^{p})+J_{b^{p}-}^{\alpha
}\left( w\circ g\right) (a^{p})\right]  \label{0-6} \\
&\leq &\left[ J_{a^{p}+}^{\alpha }\left( fw\circ g\right)
(b^{p})+J_{b^{p}-}^{\alpha }\left( fw\circ g\right) (a^{p})\right]  \notag \\
&\leq &\frac{f(a)+f(b)}{2}\left[ J_{a^{p}+}^{\alpha }\left( w\circ g\right)
(b^{p})+J_{b^{p}-}^{\alpha }\left( w\circ g\right) (a^{p})\right] .  \notag
\end{eqnarray}%
with $g(x)=x^{1/p},x\in \left[ a^{p},b^{p}\right] $.

ii.) If $p>0$%
\begin{eqnarray*}
&&f\left( \left[ \frac{a^{p}+b^{p}}{2}\right] ^{1/p}\right) \left[
J_{b^{p}+}^{\alpha }\left( w\circ g\right) (a^{p})+J_{a^{p}-}^{\alpha
}\left( w\circ g\right) (b^{p})\right] \\
&\leq &\left[ J_{b^{p}+}^{\alpha }\left( fw\circ g\right)
(a^{p})+J_{a^{p}-}^{\alpha }\left( fw\circ g\right) (b^{p})\right] \\
&\leq &\frac{f(a)+f(b)}{2}\left[ J_{b^{p}+}^{\alpha }\left( w\circ g\right)
(a^{p})+J_{a^{p}-}^{\alpha }\left( w\circ g\right) (b^{p})\right] .
\end{eqnarray*}%
with $g(x)=x^{1/p},x\in \left[ b^{p},a^{p}\right] $.
\end{theorem}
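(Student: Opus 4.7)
The plan is to reduce the fractional Hermite-Hadamard-Fej\'{e}r inequalities to pointwise consequences of $p$-convexity by a substitution that expresses both fractional integrals as integrals against $t^{\alpha-1}$ on $[0,1]$, and then to use the $p$-symmetry of $w$ to combine the two terms. I will sketch only part (i); part (ii) follows the same pattern on $[b^{p},a^{p}]$ (the interval is reversed because $p<0$), with the roles of $J_{+}$ and $J_{-}$ interchanged accordingly.

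\textbf{Step 1 (substitution).} The change of variables $s=ta^{p}+(1-t)b^{p}$ converts the Riemann-Liouville fractional integrals appearing in the statement into integrals on $[0,1]$: since $b^{p}-s=t(b^{p}-a^{p})$ and $s-a^{p}=(1-t)(b^{p}-a^{p})$, I would obtain
\[
J_{a^{p}+}^{\alpha}(w\circ g)(b^{p})=\frac{(b^{p}-a^{p})^{\alpha}}{\Gamma(\alpha)}\int_{0}^{1}t^{\alpha-1}w\bigl(u(t)\bigr)\,dt,
\]
with $u(t)=[ta^{p}+(1-t)b^{p}]^{1/p}$, and the companion identity for $J_{b^{p}-}^{\alpha}(w\circ g)(a^{p})$ with $(1-t)^{\alpha-1}$ in place of $t^{\alpha-1}$. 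Flipping $t\mapsto 1-t$ in the second and invoking the $p$-symmetry $w(x)=w([a^{p}+b^{p}-x^{p}]^{1/p})$ shows that the two fractional integrals actually coincide. The same substitutions applied to $fw\circ g$ write the middle quantity of \eqref{0-6} as
\[
\frac{(b^{p}-a^{p})^{\alpha}}{\Gamma(\alpha)}\int_{0}^{1}t^{\alpha-1}\bigl[f(u(t))w(u(t))+f(v(t))w(v(t))\bigr]\,dt,
\]
where $v(t)=[(1-t)a^{p}+tb^{p}]^{1/p}$.

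\textbf{Step 2 (pointwise bounds).} The identity $u(t)^{p}+v(t)^{p}=a^{p}+b^{p}$ together with the midpoint form of $p$-convexity (apply \eqref{0-4} with $t=\tfrac12$ to the pair $u(t),v(t)$) yields
\[
2f\!\left(\!\left[\tfrac{a^{p}+b^{p}}{2}\right]^{1/p}\right)\le f(u(t))+f(v(t))\le f(a)+f(b),
\]
the upper bound coming from two direct applications of \eqref{0-4} with parameters $t$ and $1-t$. Because $w(u(t))=w(v(t))$ by $p$-symmetry, the mixed term satisfies $f(u)w(u)+f(v)w(v)=[f(u)+f(v)]\,w(u)$; multiplying the displayed two-sided bound by $t^{\alpha-1}w(u(t))\ge 0$ and integrating on $[0,1]$ gives, via the identities in Step~1, both outer inequalities of \eqref{0-6}.

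The main obstacle is essentially bookkeeping in Step~1: matching the two separate fractional integrals $J_{a^{p}+}^{\alpha}$ and $J_{b^{p}-}^{\alpha}$ to the two pointwise terms $f(u)w(u)$ and $f(v)w(v)$ after substitution, and calling on the $p$-symmetry of $w$ at precisely the moment where weights at $u(t)$ and $v(t)$ need to be identified so that the combined integrand is bounded above and below by \emph{constant} multiples of $w(u(t))$. Once this is arranged, no further use of $p$-convexity is needed beyond Definition~\ref{0.2}, and part (ii) repeats verbatim on the interval $[b^{p},a^{p}]$.
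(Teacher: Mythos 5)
Your argument is correct: the substitution $s=ta^{p}+(1-t)b^{p}$ turns both fractional integrals into weighted integrals against $t^{\alpha-1}$ on $[0,1]$, the identity $u(t)^{p}+v(t)^{p}=a^{p}+b^{p}$ together with the $p$-symmetry of $w$ identifies the weights $w(u(t))=w(v(t))$, and the two pointwise consequences of Definition \ref{0.2} (midpoint form for the lower bound, the applications with parameters $t$ and $1-t$ for the upper bound) yield the stated two-sided inequality after multiplying by $t^{\alpha-1}w(u(t))\geq 0$ and integrating. The paper states Theorem \ref{T-1} without proof (it is quoted from \cite{KI18}), and your route is essentially the standard one used there; the only point to flag is that part (ii) should read $p<0$ (a typo in the statement), which your remark about the reversed interval $[b^{p},a^{p}]$ implicitly corrects.
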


For a function $f:[a,b]\mathbb{\rightarrow R}$ we consider the symmetrical
transform of $f$ on the interval , denoted by $\overset{\smile }{f}_{[a,b]}$
or simply $\overset{\smile }{f}$, when the interval $[a,b]$ is implicit,
which

is defined by 
\begin{equation*}
\overset{\smile }{f}(x):=\frac{1}{2}\left[ f(x)+f\left( a+b-x\right) \right]
,~x\in \left[ a,b\right] .
\end{equation*}%
The anti symmetrical transform of $f$ on the interval $[a,b]$ is denoted by $%
\overset{\sim }{f}_{[a,b]}$or simply $\overset{\sim }{f}$ as defined by%
\begin{equation*}
\overset{\sim }{f}(x):=\frac{1}{2}\left[ f(x)-f\left( a+b-x\right) \right]
,~x\in \left[ a,b\right] .
\end{equation*}%
It is obvious that for any function $f$ we have $\overset{\smile }{f}+%
\overset{\sim }{f}=f$.

If $f$ is convex on $[a,b]$, then $\overset{\smile }{f}$ is also convex on $%
[a,b].$ But, when $\overset{\smile }{f}$ is onvex on $[a,b]$, $f$ may not be
convex on $[a,b]$ ( \cite{D16}).

In \cite{D16}, Dragomir introduced symmetrized convexity concept as follow:

\begin{definition}
A function $f:[a,b]\mathbb{\rightarrow R}$ is said to be symmetrized convex
(concave)on $[a,b]$ if symmetrical transform $\overset{\smile }{f}$ is
convex (concave) on $[a,b]$.
\end{definition}

Dragomir extends the Hermite-Hadamard inequality to the class of symmetrized
convex functions as follow:

\begin{theorem}[\protect\cite{D16}]
\label{T1-1}Assume that $f:[a,b]\mathbb{\rightarrow R}$ is symmetrized
convex on the interval $[a,b]$, then we have the Hermite-Hadamard
inequalities%
\begin{equation}
f\left( \frac{a+b}{2}\right) \leq \frac{1}{b-a}\int_{a}^{b}f(x)dx\leq \frac{%
f(a)+f(b)}{2}.  \label{1-2}
\end{equation}
\end{theorem}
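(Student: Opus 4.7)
The plan is to apply the classical Hermite--Hadamard inequality to the symmetrized function $\overset{\smile}{f}$ itself, then observe that each of the three terms simplifies to the desired expression involving $f$.

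First, since $\overset{\smile}{f}$ is convex on $[a,b]$ by the definition of symmetrized convexity, the classical Hermite--Hadamard inequality gives
\begin{equation*}
\overset{\smile}{f}\left(\frac{a+b}{2}\right) \leq \frac{1}{b-a}\int_{a}^{b}\overset{\smile}{f}(x)\,dx \leq \frac{\overset{\smile}{f}(a)+\overset{\smile}{f}(b)}{2}.
\end{equation*}
So the only task is to compute the three quantities in this chain and match them with those appearing in \eqref{1-2}.

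The endpoint midpoint is immediate: $\overset{\smile}{f}\!\left(\frac{a+b}{2}\right) = \frac{1}{2}\bigl[f\!\left(\frac{a+b}{2}\right) + f\!\left(a+b-\frac{a+b}{2}\right)\bigr] = f\!\left(\frac{a+b}{2}\right)$. The right-hand endpoint average is equally direct: $\overset{\smile}{f}(a) = \overset{\smile}{f}(b) = \frac{f(a)+f(b)}{2}$, so the upper bound collapses to $\frac{f(a)+f(b)}{2}$. For the middle term, I would use the change of variable $u = a+b-x$ in $\int_a^b f(a+b-x)\,dx$ to see that it equals $\int_a^b f(u)\,du$, which yields $\int_a^b \overset{\smile}{f}(x)\,dx = \int_a^b f(x)\,dx$.

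There is no real obstacle here: once the three identities above are recorded, the inequality follows by direct substitution into the classical Hermite--Hadamard chain applied to the convex function $\overset{\smile}{f}$. The conceptual content is entirely in the definition of symmetrized convexity, and the proof is essentially a bookkeeping computation that uses only the symmetry $x \mapsto a+b-x$ of the interval $[a,b]$ about its midpoint.
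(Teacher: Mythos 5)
Your proof is correct: applying the classical Hermite--Hadamard inequality to the convex function $\overset{\smile }{f}$ and simplifying the three terms via the symmetry $x\mapsto a+b-x$ is exactly the intended argument. The paper quotes this theorem from the literature without proof, but its own proof of the $p$-convex generalization (Theorem \ref{T2-1}) proceeds by precisely the same scheme applied to $P_{\left( f;p\right) }$, so your approach matches the paper's.
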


\begin{theorem}[\protect\cite{D16}]
\label{T1-2}Assume that $f:[a,b]\mathbb{\rightarrow R}$ is symmetrized
convex on the interval $[a,b]$. Then for any $x\in \lbrack a,b]$ we have the
bounds%
\begin{equation}
f\left( \frac{a+b}{2}\right) \leq \overset{\smile }{f}\left( x\right) =\frac{%
1}{2}\left[ f(x)+f(a+b-x)\right] \leq \frac{f(a)+f(b)}{2}  \label{1-3}
\end{equation}
\end{theorem}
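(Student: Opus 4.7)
The plan is to exploit two structural properties of the symmetrical transform $\breve{f}$: (i) by hypothesis it is convex on $[a,b]$, and (ii) it is symmetric about the midpoint, since
\[
\breve{f}(a+b-x) = \tfrac{1}{2}\bigl[f(a+b-x) + f(x)\bigr] = \breve{f}(x).
\]
In particular, $\breve{f}(a) = \breve{f}(b) = \tfrac{1}{2}[f(a)+f(b)]$ and $\breve{f}\bigl(\tfrac{a+b}{2}\bigr) = f\bigl(\tfrac{a+b}{2}\bigr)$. Once these identities are in place, both inequalities reduce to applying the convexity of $\breve{f}$ at well-chosen convex combinations.

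For the left inequality, I would write the midpoint as $\tfrac{a+b}{2} = \tfrac{1}{2}x + \tfrac{1}{2}(a+b-x)$ and invoke convexity of $\breve{f}$:
\[
f\!\left(\tfrac{a+b}{2}\right) = \breve{f}\!\left(\tfrac{a+b}{2}\right) \leq \tfrac{1}{2}\breve{f}(x) + \tfrac{1}{2}\breve{f}(a+b-x) = \breve{f}(x),
\]
where the final equality uses the symmetry identity above.

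For the right inequality, given $x \in [a,b]$, write $x = (1-t)a + tb$ for the unique $t \in [0,1]$ (so that $a+b-x = ta + (1-t)b$). Convexity of $\breve{f}$ then yields
\[
\breve{f}(x) \leq (1-t)\breve{f}(a) + t\breve{f}(b) = \tfrac{f(a)+f(b)}{2},
\]
since $\breve{f}(a) = \breve{f}(b) = \tfrac{1}{2}[f(a)+f(b)]$ collapses the convex combination to its common value.

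There is no real obstacle: the entire argument hinges on the elementary observation that $\breve{f}$ is simultaneously convex (by the definition of symmetrized convexity) and symmetric about $\tfrac{a+b}{2}$ (by construction). The only thing one must be careful about is not to confuse $f$ with $\breve{f}$ at the endpoints — once one notices that $\breve{f}$ takes the common value $\tfrac{f(a)+f(b)}{2}$ at both $a$ and $b$, and agrees with $f$ at the midpoint, the two bounds in $(\ref{1-3})$ are immediate. It is worth noting that this theorem in fact \emph{refines} Theorem \ref{T1-1}, since integrating $(\ref{1-3})$ over $[a,b]$ recovers $(\ref{1-2})$, though this direction is not needed for the proof itself.
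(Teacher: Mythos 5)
Your proof is correct and follows essentially the same route the paper uses for its generalization (Theorem \ref{T2-2}): the left bound via convexity of the symmetrical transform at the midpoint $\frac{a+b}{2}=\frac{1}{2}x+\frac{1}{2}(a+b-x)$ together with the symmetry $\overset{\smile }{f}(a+b-x)=\overset{\smile }{f}(x)$, and the right bound via writing $x$ as a convex combination of $a$ and $b$ and noting $\overset{\smile }{f}(a)=\overset{\smile }{f}(b)=\frac{f(a)+f(b)}{2}$. The paper itself only cites this statement from the literature, but its proof of the $p$-convex analogue is exactly your argument.
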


\begin{corollary}
\label{C1-1}If $f:[a,b]\mathbb{\rightarrow R}$ is symmetrized convex on the
interval $[a,b]$ and $w:[a,b]\rightarrow \lbrack 0,\infty )$is integrable on 
$[a,b]$, then%
\begin{equation*}
f\left( \frac{a+b}{2}\right) \int_{a}^{b}w(x)dx\leq
\int_{a}^{b}f(x)w(x)dx\leq \frac{f(a)+f(b)}{2}\int_{a}^{b}w(x)dx.
\end{equation*}
\end{corollary}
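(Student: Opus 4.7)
My plan is to derive the weighted inequalities directly from the pointwise two-sided bound supplied by Theorem~\ref{T1-2}. That theorem already asserts, for every $x\in[a,b]$,
\begin{equation*}
f\!\left(\tfrac{a+b}{2}\right) \le \overset{\smile}{f}(x) \le \tfrac{f(a)+f(b)}{2},
\end{equation*}
so the corollary is essentially a matter of multiplying through by the nonnegative weight $w$ and integrating.

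The execution then proceeds in three quick steps. First, since $w(x)\ge 0$, I would multiply each part of the pointwise bound by $w(x)$ (which preserves both inequality directions) and then integrate over $[a,b]$, using linearity, to obtain
\begin{equation*}
f\!\left(\tfrac{a+b}{2}\right)\int_a^b w(x)\,dx \le \int_a^b \overset{\smile}{f}(x)w(x)\,dx \le \tfrac{f(a)+f(b)}{2}\int_a^b w(x)\,dx.
\end{equation*}
Second, expanding $\overset{\smile}{f}(x)=\tfrac12[f(x)+f(a+b-x)]$ and performing the change of variable $u=a+b-x$ in the second summand delivers the identity
\begin{equation*}
\int_a^b \overset{\smile}{f}(x)w(x)\,dx \;=\; \int_a^b f(x)\overset{\smile}{w}(x)\,dx.
\end{equation*}
Third, this middle integral will collapse to $\int_a^b f(x)w(x)\,dx$ as soon as $\overset{\smile}{w}=w$, i.e.\ as soon as $w$ is symmetric with respect to $\tfrac{a+b}{2}$, at which point the three displays chain together to give the stated conclusion.

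The main obstacle is precisely this last identification. Theorem~\ref{T1-2} naturally controls $\overset{\smile}{f}\cdot w$, not $f\cdot w$ directly; and a quick test (take $f(x)=x$ on $[0,1]$, whose symmetrical transform is the constant $\tfrac12$, together with $w(x)=x$) shows that for genuinely non-symmetric weights the right-hand inequality can fail. I therefore expect the proof to go through under the (presumably implicit) additional hypothesis that $w$ is symmetric with respect to $\tfrac{a+b}{2}$, in direct parallel with the $p$-symmetric conditions imposed in Theorems~\ref{T-0} and~\ref{T-1}. With that symmetry in place, the three-step argument above closes the proof immediately.
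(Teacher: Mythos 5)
Your three-step argument is exactly the one the paper itself uses for the $p$-analogue, Corollary \ref{C2-1}: multiply the pointwise bounds of Theorem \ref{T1-2} by $w(x)\geq 0$, integrate, and then pass from $\int_a^b \overset{\smile }{f}(x)w(x)\,dx$ to $\int_a^b f(x)w(x)\,dx$ via the substitution $u=a+b-x$ together with the symmetry of $w$. The paper gives no separate proof of Corollary \ref{C1-1} (it is quoted from \cite{D16}), but its Corollary \ref{C2-1} is organized precisely as you anticipate: an unconditional inequality for the weighted integral of the symmetrical transform, and the inequality for $\int f w$ only under the additional hypothesis that $w$ is ($p$-)symmetric. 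You are also right that, as printed, Corollary \ref{C1-1} omits the hypothesis that $w$ be symmetric about $\frac{a+b}{2}$, and your counterexample is valid: with $f(x)=x$ and $w(x)=x$ on $[0,1]$ one has $\int_0^1 f(x)w(x)\,dx=\frac{1}{3}>\frac{1}{4}=\frac{f(0)+f(1)}{2}\int_0^1 w(x)\,dx$, so the right-hand inequality fails (and $f(x)=-x$ breaks the left-hand one). That symmetry hypothesis appears in Dragomir's original statement and has evidently been dropped in transcription; with it restored, your argument closes the proof and coincides with the intended one.
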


\begin{theorem}[\protect\cite{D16}]
\label{T1-3}Assume that $f:[a,b]\subseteq \left( 0,\infty \right)
\rightarrow 
\mathbb{R}
$ is symmetrized convex on the interval $[a,b].$ Then for any $x,y\in
\lbrack a,b]$ with $x\neq y$ we have the Hermite-Hadamard inequalities%
\begin{eqnarray}
&&\frac{1}{2}\left[ f\left( \frac{x+y}{2}\right) +f\left( a+b-\frac{x+y}{2}%
\right) \right]  \label{1-4} \\
&\leq &\frac{1}{2\left( y-x\right) }\left[ \int_{x}^{y}f(t)dt+%
\int_{a+b-y}^{a+b-x}f(t)dt\right]  \notag \\
&\leq &\frac{1}{4}\left[ f(x)+f(y)+f\left( a+b-x\right) +f\left(
a+b-y\right) \right] .  \notag
\end{eqnarray}
\end{theorem}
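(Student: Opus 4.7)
The plan is to reduce the statement to the classical Hermite--Hadamard inequality applied to the symmetrical transform $\breve{f}$, which is convex on $[a,b]$ by the symmetrized convexity hypothesis. Without loss of generality I would assume $x<y$ (otherwise swap $x$ and $y$; the three expressions in (\ref{1-4}) are symmetric in $x,y$).

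First, since $\breve{f}$ is convex on $[a,b]$ and $[x,y]\subseteq[a,b]$, the standard Hermite--Hadamard inequality gives
\begin{equation*}
\breve{f}\!\left(\frac{x+y}{2}\right)\le \frac{1}{y-x}\int_{x}^{y}\breve{f}(t)\,dt\le \frac{\breve{f}(x)+\breve{f}(y)}{2}.
\end{equation*}
The proof then amounts to unpacking the definition $\breve{f}(u)=\tfrac{1}{2}[f(u)+f(a+b-u)]$ in each of the three terms.

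The left endpoint is immediate: $\breve{f}\!\left(\frac{x+y}{2}\right)=\tfrac12\!\left[f\!\left(\tfrac{x+y}{2}\right)+f\!\left(a+b-\tfrac{x+y}{2}\right)\right]$, which is the left-hand side of (\ref{1-4}). The right endpoint is also immediate: $\tfrac12[\breve{f}(x)+\breve{f}(y)]=\tfrac14[f(x)+f(a+b-x)+f(y)+f(a+b-y)]$, matching the right-hand side of (\ref{1-4}). The only step requiring a (routine) computation is the middle term: writing
\begin{equation*}
\int_{x}^{y}\breve{f}(t)\,dt=\frac{1}{2}\int_{x}^{y}f(t)\,dt+\frac{1}{2}\int_{x}^{y}f(a+b-t)\,dt,
\end{equation*}
and substituting $u=a+b-t$ in the second integral (so $du=-dt$, with limits $a+b-x$ and $a+b-y$) transforms it into $\int_{a+b-y}^{a+b-x}f(u)\,du$. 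Dividing by $y-x$ then yields exactly the middle expression in (\ref{1-4}).

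There is no real obstacle here; the only thing to be careful about is the orientation of the limits after the substitution $u=a+b-t$, which flips the bounds and supplies the correct interval $[a+b-y,a+b-x]$. The strategy mirrors exactly the Dragomir proofs of Theorems \ref{T1-1} and \ref{T1-2}, where the classical Hermite--Hadamard inequality is applied to $\breve{f}$ and then the definition of the symmetrical transform is used to re-express the result purely in terms of $f$.
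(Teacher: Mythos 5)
Your proof is correct and follows exactly the approach the paper uses: the statement is quoted from Dragomir without proof here, but the paper's proof of its generalization (Theorem \ref{T2-3}) is precisely your argument with $p=1$ --- apply the classical Hermite--Hadamard inequality to the convex transform $\overset{\smile}{f}$ on the subinterval $[x,y]$ and unpack the definition, using the substitution $u=a+b-t$ for the middle term. Nothing is missing.
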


For a function $f:[a,b]\subseteq \mathbb{R\diagdown }\left\{ 0\right\} 
\mathbb{\rightarrow }\mathbb{%
\mathbb{C}
}$, we consider the symmetrical transform of $f$ on the interval , denoted
by $\overset{\smile }{Hf}_{[a,b]}$ or simply $\overset{\smile }{Hf}$, when
the interval $[a,b]$ as defined by 
\begin{equation*}
\overset{\smile }{Hf}(x):=\frac{1}{2}\left[ f(x)+f\left( \frac{abx}{\left(
a+b\right) x-ab}\right) \right] ,~x\in \left[ a,b\right] .
\end{equation*}

\begin{definition}[\protect\cite{WABH17}]
A function $f:I\subseteq \mathbb{R\diagdown }\left\{ 0\right\} \mathbb{%
\rightarrow R}$ is said to be symmetrized harmonic convex (concave)on $[a,b]$
if $\overset{\smile }{Hf}$ is harmonic convex (concave) on $I$.
\end{definition}

The similars of above results given for the class of symmetrized convex
functions, in \cite{WABH17} it has been obtained by Wu et al. for the class
of symmetrized harmonic convex functions as follow:

\begin{theorem}[\protect\cite{WABH17}]
\label{T1-4}Assume that $f:[a,b]\subseteq \mathbb{R\diagdown }\left\{
0\right\} \mathbb{\rightarrow R}$ is symmetrized harmonic convex and
integrable on the interval $[a,b]$. Then we have the Hermite-Hadamard type 
\.{I}\c{s}can inequalities%
\begin{equation}
f\left( \frac{2ab}{a+b}\right) \leq \frac{ab}{b-a}\dint\limits_{a}^{b}\frac{%
f(x)}{x^{2}}dx\leq \frac{f(a)+f(b)}{2}.  \label{1-5}
\end{equation}
\end{theorem}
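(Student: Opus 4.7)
The plan is to apply Theorem \ref{1.3} directly to the symmetrical transform $\overset{\smile}{Hf}$, which is harmonic convex on $[a,b]$ by the very definition of symmetrized harmonic convexity, and then identify each of the three resulting quantities with the corresponding term in (\ref{1-5}).

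Set $\phi(x):=\frac{abx}{(a+b)x-ab}$. I would first record three elementary properties of $\phi$: it is an involution of $[a,b]$ swapping $a$ and $b$; the harmonic midpoint $\frac{2ab}{a+b}$ is its unique fixed point in $[a,b]$; and the measure $\frac{dx}{x^2}$ is invariant under $\phi$ (equivalently, $\phi'(x)=-a^2b^2/[(a+b)x-ab]^2$ combines with $\phi(x)^{-2}$ to cancel out). The fixed-point property yields $\overset{\smile}{Hf}\bigl(\frac{2ab}{a+b}\bigr)=f\bigl(\frac{2ab}{a+b}\bigr)$, reproducing the left-hand side of (\ref{1-5}). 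From $\phi(a)=b$ and $\phi(b)=a$ one obtains $\overset{\smile}{Hf}(a)=\overset{\smile}{Hf}(b)=\frac{f(a)+f(b)}{2}$, matching the upper bound of (\ref{1-5}). For the middle term, splitting $\overset{\smile}{Hf}$ according to its definition and substituting $u=\phi(x)$ in the half containing $f\circ\phi$---using the invariance of $\frac{dx}{x^2}$ together with $\phi(a)=b$, $\phi(b)=a$ to reverse the limits---gives $\int_a^b \frac{\overset{\smile}{Hf}(x)}{x^2}\,dx=\int_a^b \frac{f(x)}{x^2}\,dx$, which is exactly the central expression of (\ref{1-5}).

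The only real obstacle is verifying the invariance of the harmonic measure under $\phi$; once this is in hand, the result follows immediately from Theorem \ref{1.3} applied to $\overset{\smile}{Hf}$. Equivalently, and perhaps more transparently, one can globally change variables via $t=1/x$ and set $h(t):=f(1/t)$ on $[1/b,1/a]$: under this substitution $\phi$ corresponds to the ordinary reflection $t\mapsto \tfrac{1}{a}+\tfrac{1}{b}-t$, so symmetrized harmonic convexity of $f$ becomes ordinary symmetrized convexity of $h$, the three terms of (\ref{1-5}) transform term-by-term into the three terms of (\ref{1-2}), and the conclusion follows by invoking Theorem \ref{T1-1} for $h$.
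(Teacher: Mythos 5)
Your proposal is correct, and it is essentially the argument the paper itself uses: Theorem \ref{T1-4} is quoted from \cite{WABH17} without proof here, but the paper's generalization (Theorem \ref{T2-1}, whose case $p=-1$ is exactly this statement) is proved in precisely your first way --- apply the known Hermite--Hadamard inequality to the symmetrical transform $\overset{\smile }{Hf}=P_{(f;-1)}$ and check that the three resulting terms coincide with those for $f$ via the involution $\phi$ and the invariance of $dx/x^{2}$. Your alternative route through $t=1/x$ and Theorem \ref{T1-1} likewise mirrors the paper's second proof of Theorem \ref{T2-1} via Proposition \ref{P2-1}, so nothing further is needed.
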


\begin{theorem}[\protect\cite{WABH17}]
\label{T1-5}Assume that $f:[a,b]\subseteq \mathbb{R\diagdown }\left\{
0\right\} \mathbb{\rightarrow R}$ is symmetrized harmonic convex on the
interval $[a,b]$. Then for any $x\in \lbrack a,b]$ we have the bounds%
\begin{equation}
f\left( \frac{2ab}{a+b}\right) \leq \overset{\smile }{Hf}\left( x\right) =%
\frac{1}{2}\left[ f(x)+f\left( \frac{abx}{\left( a+b\right) x-ab}\right) %
\right] \leq \frac{f(a)+f(b)}{2}  \label{1-6}
\end{equation}
\end{theorem}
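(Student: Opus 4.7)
The plan is to exploit the harmonic convexity of $g := \overset{\smile}{Hf}$ on $[a,b]$ together with the symmetry built into $\overset{\smile}{Hf}$. Throughout, let me write $\tau(x) := \frac{abx}{(a+b)x-ab}$ for the harmonic reflection, so that $\overset{\smile}{Hf}(x) = \tfrac{1}{2}[f(x)+f(\tau(x))]$. A few preliminary facts I would record first: (i) $\frac{1}{\tau(x)} = \frac{1}{a}+\frac{1}{b}-\frac{1}{x}$, so $\tau$ is an involution of $[a,b]$ with $\tau(a)=b$, $\tau(b)=a$, and $\tau\!\left(\frac{2ab}{a+b}\right)=\frac{2ab}{a+b}$; (ii) consequently $\overset{\smile}{Hf}(a) = \overset{\smile}{Hf}(b) = \frac{f(a)+f(b)}{2}$ and $\overset{\smile}{Hf}\!\left(\frac{2ab}{a+b}\right) = f\!\left(\frac{2ab}{a+b}\right)$; and (iii) $\overset{\smile}{Hf}(\tau(x)) = \overset{\smile}{Hf}(x)$ because $\tau$ permutes the two summands.

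For the \textbf{upper bound}, given $x\in[a,b]$ I would choose $t\in[0,1]$ so that $x = \frac{ab}{ta+(1-t)b}$ (this is possible because $\frac{1}{x}$ interpolates linearly between $\frac{1}{a}$ and $\frac{1}{b}$). Applying the harmonic convexity inequality (\ref{0-2}) to $g = \overset{\smile}{Hf}$ with endpoints $a,b$ yields
\begin{equation*}
\overset{\smile}{Hf}(x) \;=\; g\!\left(\frac{ab}{ta+(1-t)b}\right) \;\leq\; t\,g(b)+(1-t)\,g(a) \;=\; \frac{f(a)+f(b)}{2},
\end{equation*}
by fact (ii).

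For the \textbf{lower bound}, the key computation is to apply harmonic convexity of $g$ to the pair $(x,\tau(x))$ with weight $t=\tfrac{1}{2}$. A direct simplification gives $\frac{x\,\tau(x)}{\tfrac{1}{2}x+\tfrac{1}{2}\tau(x)} = \frac{2x\tau(x)}{x+\tau(x)} = \frac{2ab}{a+b}$; this is the main computation I would carry out carefully. Then (\ref{0-2}) with fact (iii) gives
\begin{equation*}
f\!\left(\frac{2ab}{a+b}\right) \;=\; g\!\left(\frac{2ab}{a+b}\right) \;\leq\; \tfrac{1}{2}g(x)+\tfrac{1}{2}g(\tau(x)) \;=\; \overset{\smile}{Hf}(x).
\end{equation*}

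The only real obstacle is the algebraic verification that the harmonic midpoint of $x$ and $\tau(x)$ equals $\frac{2ab}{a+b}$ (and the dual check that $\tau$ is an involution). Everything else is a direct application of the hypothesis that $\overset{\smile}{Hf}$ is harmonic convex, combined with the endpoint evaluations in fact (ii). One subtlety worth noting: if $0\in[a,b]$ then $\tau$ is not well defined, so implicitly the statement presumes $[a,b]\subset(0,\infty)$ or $[a,b]\subset(-\infty,0)$; this should be acknowledged, but otherwise no additional work is needed.
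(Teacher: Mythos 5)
Your proof is correct and follows essentially the same route as the paper: the statement is quoted there from the literature, but the paper's own proof of the general case (Theorem \ref{T2-2}) uses exactly your two steps — applying convexity of the symmetrical transform at the midpoint of the pair $(x,\tau(x))$ for the lower bound, and writing $x$ as a (harmonic) combination of the endpoints together with $\overset{\smile }{Hf}(a)=\overset{\smile }{Hf}(b)=\frac{f(a)+f(b)}{2}$ for the upper bound. Your remark that the interval must avoid $0$ is a reasonable observation about the hypotheses but requires no further work.
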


\begin{theorem}[\protect\cite{WABH17}]
\label{T1-6}Assume that $f:[a,b]\subseteq \mathbb{R\diagdown }\left\{
0\right\} \rightarrow 
\mathbb{R}
$ is symmetrized harmonic convex on the interval $[a,b].$ Then for any $%
x,y\in \lbrack a,b]$ with $x\neq y$ we have the Hermite-Hadamard inequalities%
\begin{eqnarray}
&&\frac{1}{2}\left[ f\left( \frac{2xy}{x+y}\right) +f\left( \frac{2abxy}{%
2xy\left( a+b\right) -ab(x+y)}\right) \right]  \label{1-7} \\
&\leq &\frac{xy}{2\left( y-x\right) }\left[ \int_{x}^{y}\frac{f(t)}{t^{2}}%
dt+\int_{\frac{aby}{\left( a+b\right) y-ab}}^{\frac{abx}{\left( a+b\right)
x-ab}}\frac{f(t)}{t^{2}}dt\right]  \notag \\
&\leq &\frac{1}{4}\left[ f(x)+f(y)+f\left( \frac{abx}{\left( a+b\right) x-ab}%
\right) +f\left( \frac{aby}{\left( a+b\right) y-ab}\right) \right] .  \notag
\end{eqnarray}
\end{theorem}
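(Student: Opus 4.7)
The plan is to reduce Theorem \ref{T1-6} to the standard Hermite--Hadamard inequality for harmonic convex functions (Theorem \ref{1.3}) applied to the transformed function $\overset{\smile}{Hf}$. By hypothesis, $\overset{\smile}{Hf}$ is harmonic convex on $[a,b]$, and harmonic convexity is inherited by any sub-interval. Given $x,y\in[a,b]$ with $x\neq y$, I may therefore apply the classical \.{I}\c{s}can inequality on $[x,y]$ (or $[y,x]$, after possibly swapping) to obtain
\begin{equation*}
\overset{\smile}{Hf}\!\left(\frac{2xy}{x+y}\right)\leq \frac{xy}{y-x}\int_{x}^{y}\frac{\overset{\smile}{Hf}(t)}{t^{2}}\,dt\leq \frac{\overset{\smile}{Hf}(x)+\overset{\smile}{Hf}(y)}{2}.
\end{equation*}

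Next I would expand each term in this chain using the definition of $\overset{\smile}{Hf}$. For the right bound, averaging $\overset{\smile}{Hf}(x)$ and $\overset{\smile}{Hf}(y)$ immediately produces the four-term expression on the right-hand side of \eqref{1-7}. For the left bound, substituting $u=\frac{2xy}{x+y}$ into the conjugate $\frac{abu}{(a+b)u-ab}$ and simplifying the resulting compound fraction yields $\frac{2abxy}{2xy(a+b)-ab(x+y)}$, which matches the second argument on the left of \eqref{1-7}. These two identifications are essentially algebraic and routine.

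The step that requires the most care is the middle term, where I must split
\begin{equation*}
\frac{xy}{y-x}\int_{x}^{y}\frac{\overset{\smile}{Hf}(t)}{t^{2}}\,dt =\frac{xy}{2(y-x)}\int_{x}^{y}\frac{f(t)}{t^{2}}\,dt +\frac{xy}{2(y-x)}\int_{x}^{y}\frac{f\!\left(\frac{abt}{(a+b)t-ab}\right)}{t^{2}}\,dt,
\end{equation*}
and convert the second integral into the form appearing in \eqref{1-7}. The natural substitution is $s=\frac{abt}{(a+b)t-ab}$, equivalently $\frac{1}{s}=\frac{1}{a}+\frac{1}{b}-\frac{1}{t}$, which gives $\frac{dt}{t^{2}}=-\frac{ds}{s^{2}}$ and maps the endpoints $t=x,y$ to $s=\frac{abx}{(a+b)x-ab}, \frac{aby}{(a+b)y-ab}$ respectively. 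The sign flip together with the orientation reversal produces exactly the integral from $\frac{aby}{(a+b)y-ab}$ to $\frac{abx}{(a+b)x-ab}$ of $\frac{f(s)}{s^{2}}$ appearing in the theorem.

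The main obstacle is therefore not conceptual but notational: carefully handling the substitution and the direction of integration, and verifying that the harmonic involution $t\mapsto \frac{abt}{(a+b)t-ab}$ behaves as expected on $[a,b]$ (in particular that it reverses orientation and is an involution fixing the harmonic mean $\frac{2ab}{a+b}$). Once these identifications are in place, the three inequalities of \eqref{1-7} follow directly from the three terms of the harmonic Hermite--Hadamard inequality applied to $\overset{\smile}{Hf}$, with no further convexity arguments required.
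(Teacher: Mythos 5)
Your proposal is correct and follows the same route the paper uses to prove its generalization (Theorem \ref{T2-3}): restrict the symmetrical transform to the subinterval $[x,y]$, apply the Hermite--Hadamard inequality for harmonic (resp.\ $p$-) convex functions there, and convert the integral of the conjugated term by the involution $t\mapsto \frac{abt}{(a+b)t-ab}$. Theorem \ref{T1-6} is exactly the $p=-1$ case of that argument, so nothing further is needed.
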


Motivated by the above results, in this paper we introduces the concept of
the symmetrized $p$-convex function and establish some Hermite-Hadamard type
inequalities. Some examples of interest are provided as well. \ 

\section{Symmetrized $p$-Convexity}

For a function $f:[a,b]\subseteq \left( 0,\infty \right) \mathbb{\rightarrow
R}$ we consider the $p$-symmetrical transform of $f$ on the interval ,
denoted by $P_{\left( f;p\right) ,[a,b]}$ or simply $P_{\left( f;p\right) }$%
, when the interval $[a,b]$ is implicit, which

is defined by 
\begin{equation*}
P_{\left( f;p\right) }(x):=\frac{1}{2}\left[ f(x)+f\left( \left[
a^{p}+b^{p}-x^{p}\right] ^{1/p}\right) \right] ,~x\in \left[ a,b\right] .
\end{equation*}%
The anti $p$-symmetrical transform of $f$ on the interval $[a,b]$ is denoted
by $AP_{\left( f;p\right) ,[a,b]}$ or simply $AP_{\left( f;p\right) }$ as
defined by%
\begin{equation*}
AP_{\left( f;p\right) }(x):=\frac{1}{2}\left[ f(x)-f\left( \left[
a^{p}+b^{p}-x^{p}\right] ^{1/p}\right) \right] ,~x\in \left[ a,b\right] .
\end{equation*}%
It is obvious that for any function $f$ we have $P_{\left( f;p\right)
}+AP_{\left( f;p\right) }=f$. Also it is seen that $P_{\left( f;1\right)
}(x)=\frac{1}{2}\left[ f(x)+f\left( a+b-x\right) \right] =\overset{\smile }{f%
}(x)$ and $P_{\left( f;-1\right) }(x)=\frac{1}{2}\left[ f(x)+f\left( \frac{%
abx}{\left( a+b\right) x-ab}\right) \right] =\overset{\smile }{Hf}(x).$

If $f$ is $p$-convex on $[a,b]$, then $P_{\left( f;p\right) }$ is also $p$%
-convex on $[a,b].$ Indeed, for any $x,y\in \lbrack a,b]$ and $t\in \left[
0,1\right] $ we have%
\begin{eqnarray*}
&&P_{\left( f;p\right) }(\left[ tx^{p}+(1-t)y^{p}\right] ^{1/p}) \\
&=&\frac{1}{2}\left[ f(\left[ tx^{p}+(1-t)y^{p}\right] ^{1/p})+f\left( \left[
a^{p}+b^{p}-tx^{p}-(1-t)y^{p}\right] ^{1/p}\right) \right] \\
&=&\frac{1}{2}\left[ f(\left[ tx^{p}+(1-t)y^{p}\right] ^{1/p})+f\left( \left[
t\left( a^{p}+b^{p}-x^{p}\right) +(1-t)\left( a^{p}+b^{p}-y^{p}\right) %
\right] ^{1/p}\right) \right] \\
&\leq &t\frac{1}{2}\left[ f(x)+f\left( \left[ a^{p}+b^{p}-x^{p}\right]
^{1/p}\right) \right] +(1-t)\frac{1}{2}\left[ f(y)+f\left( \left[
a^{p}+b^{p}-y^{p}\right] ^{1/p}\right) \right] \\
&=&tP_{\left( f;p\right) }(x)+(1-t)P_{\left( f;p\right) }(y).
\end{eqnarray*}

\begin{remark}
If $P_{\left( f;p\right) }$ is $p$-convex on $[a,b]$ for a function $%
f:[a,b]\subseteq \left( 0,\infty \right) \mathbb{\rightarrow R}$, then the
function $f$ is nor necessary $p$-convex on $[a,b]$. For example, let $p=-1$%
, Consider the function $f(x)=-\ln x,x\in \left( 0,\infty \right) $. The
function $f$ is not $-1$-convex (or harmonically convex), but $P_{\left(
f;-1\right) }$ is $-1$-convex \cite{WABH17}.
\end{remark}

\begin{definition}
A function $f:[a,b]\subseteq \left( 0,\infty \right) \mathbb{\rightarrow R}$
is said to be symmetrized $p$-convex ($p$-concave)on $[a,b]$ if $p$%
-symmetrical transform $P_{\left( f;p\right) }$ is $p$-convex ($p$-concave)
on $[a,b]$.
\end{definition}

\begin{example}
Let $a,b\in 
\mathbb{R}
$ with $0<a<b$ and $\alpha \geq 2$. Then the function $f:\left[ a,b\right]
\rightarrow 
\mathbb{R}
,~f(x)=\left( x^{p}-a^{p}\right) ^{\alpha -1},$ $p>0,$ (or $f(x)=\left(
a^{p}-x^{p}\right) ^{\alpha -1},$ $p<0$) is $p$-convex on $\left[ a,b\right] 
$. Indeed, for any $u,v\in \left[ a,b\right] $ and $t\in \left[ 0,1\right] $
by convexity of the function $g(\varsigma )=\varsigma ^{\alpha -1}$,$\zeta
\geq 0$, we have%
\begin{eqnarray*}
f(\left[ tu^{p}+(1-t)v^{p}\right] ^{1/p}) &=&\left(
tu^{p}+(1-t)v^{p}-a^{p}\right) ^{\alpha -1} \\
&=&\left( t\left[ u^{p}-a^{p}\right] +(1-t)\left[ v^{p}-a^{p}\right] \right)
^{\alpha -1} \\
&\leq &t\left( u^{p}-a^{p}\right) ^{\alpha -1}+(1-t)\left(
v^{p}-a^{p}\right) ^{\alpha -1} \\
&=&tf(u)+(1-t)f(v).
\end{eqnarray*}%
Thus $P_{\left( f;p\right) }$ is also $p$-convex on $[a,b]$. Therefore $f$
is symmetrized $p$-convex function.
\end{example}

\begin{example}
Let $\alpha \geq 2$. Then the function $f:\left[ a,b\right] \subseteq \left(
0,\infty \right) \rightarrow 
\mathbb{R}
,~f(x)=\left( b^{p}-x^{p}\right) ^{\alpha -1},$ $p>0,$ (or $f(x)=\left(
x^{p}-b^{p}\right) ^{\alpha -1},$ $p<0$) is $p$-convex on $\left[ a,b\right]
.$Therefore $f$ is symmetrized $p$-convex function.
\end{example}

\begin{example}
Let $\alpha \geq 2$. Then the function $f:\left[ a,b\right] \subseteq \left(
0,\infty \right) \rightarrow 
\mathbb{R}
,~f(x)=\left( x^{p}-a^{p}\right) ^{\alpha -1}+\left( b^{p}-x^{p}\right)
^{\alpha -1},\ p>0,$ (or $f(x)=\left( a^{p}-x^{p}\right) ^{\alpha -1}+\left(
x^{p}-b^{p}\right) ^{\alpha -1},$ $p<0$) is symmetrized $p$-convex function.
\end{example}

Now if $PC[a,b]$ is the class of $p$-convex functions defined on I and $%
SPC[a,b]$ is the class of symmetrized $p$-convex functions on $[a,b]$ then%
\begin{equation*}
PC[a,b]\subsetneqq SPC[a,b].
\end{equation*}%
Also, if $[c,d]\subset \lbrack a,b]$ and $f\in SPC[a,b]$, then this does not
imply in general that $f\in SPC[c,d]$.

\begin{proposition}
\label{P2-1}Let $f:[a,b]\subseteq \left( 0,\infty \right) \mathbb{%
\rightarrow R}$ be a function and $g(x)=x^{1/p},x>0,p\neq 0$. $f$ is
symmetrized $p$-convex on the interval $[a,b]$ if and only if $f\circ g$ is
symmetrized convex on the interval $g^{-1}\left( [a,b]\right) =[a^{p},b^{p}]$
(or $[b^{p},a^{p}]$).
\end{proposition}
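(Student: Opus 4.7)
The plan is to reduce this equivalence to a direct identity between two transforms, and then chain it with the already-established correspondence between $p$-convexity of $h$ on $[a,b]$ and convexity of $h\circ g$ on $g^{-1}([a,b])$ (which was recorded just after Definition \ref{0.2}).

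First I would compute, for $y \in g^{-1}([a,b])$, the composition $(P_{(f;p)} \circ g)(y)$ directly from the definition of the $p$-symmetrical transform:
\begin{equation*}
(P_{(f;p)} \circ g)(y) = P_{(f;p)}(y^{1/p}) = \tfrac{1}{2}\bigl[f(y^{1/p}) + f\bigl([a^{p}+b^{p}-y]^{1/p}\bigr)\bigr] = \tfrac{1}{2}\bigl[(f\circ g)(y) + (f\circ g)(a^{p}+b^{p}-y)\bigr].
\end{equation*}
Since the endpoints of $g^{-1}([a,b])$ are $a^{p}$ and $b^{p}$ (in whichever order, according to the sign of $p$), the right-hand side is precisely $\overset{\smile}{(f\circ g)}(y)$, the ordinary symmetrical transform of $f\circ g$ on that interval. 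Thus the key identity is
\begin{equation*}
P_{(f;p)} \circ g \;=\; \overset{\smile}{(f\circ g)} \qquad \text{on } g^{-1}([a,b]).
\end{equation*}

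Next I would chain the equivalences. By definition, $f$ is symmetrized $p$-convex on $[a,b]$ iff $P_{(f;p)}$ is $p$-convex on $[a,b]$. By the observation made right after Definition \ref{0.2}, $P_{(f;p)}$ is $p$-convex on $[a,b]$ iff $P_{(f;p)}\circ g$ is convex on $g^{-1}([a,b])$. By the identity established above, this is the same as saying $\overset{\smile}{(f\circ g)}$ is convex on $g^{-1}([a,b])$, which by definition is exactly that $f\circ g$ is symmetrized convex on $[a^{p},b^{p}]$ (respectively $[b^{p},a^{p}]$ if $p<0$).

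The only genuinely new step is the identity $(P_{(f;p)} \circ g)(y) = \overset{\smile}{(f\circ g)}(y)$, and this is a one-line algebraic manipulation; the rest of the argument is just stringing together definitions and the $p$-convexity/convexity correspondence the paper has already recorded. The one small point to watch is bookkeeping for the case $p<0$, where $g^{-1}([a,b])=[b^{p},a^{p}]$ is written with the endpoints reversed, but the computation of the symmetrical transform uses the sum $a^{p}+b^{p}$ symmetrically in $a$ and $b$, so no real obstacle arises.
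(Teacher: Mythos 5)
Your proof is correct, and it is a cleaner packaging of the same underlying computation rather than a different idea. The paper proves the forward implication by writing $x=u^{p}$, $y=v^{p}$, expanding $\overset{\smile }{\left( f\circ g\right) }(tx+(1-t)y)$ until it becomes $P_{\left( f;p\right) }(\left[ tu^{p}+(1-t)v^{p}\right] ^{1/p})$, and then applying the $p$-convexity inequality for $P_{\left( f;p\right) }$; the converse is dismissed as following ``by a similar procedure.'' You instead isolate the pointwise identity $P_{\left( f;p\right) }\circ g=\overset{\smile }{\left( f\circ g\right) }$ on $g^{-1}([a,b])$ --- which is exactly the content of the paper's computation (\ref{1-1a}) stripped of the convex combination --- and then invoke the equivalence ``$h$ is $p$-convex on $I$ iff $h\circ g$ is convex on $g^{-1}(I)$'' that the paper recorded immediately after Definition \ref{0.2}. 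This buys you both directions of the biconditional in a single chain of equivalences, with no hand-waving about the converse, and it makes transparent why the orientation of $g^{-1}([a,b])$ (as $[a^{p},b^{p}]$ or $[b^{p},a^{p}]$ depending on the sign of $p$) is irrelevant: the symmetrical transform only sees the endpoint sum $a^{p}+b^{p}$. The one point worth stating explicitly in a final write-up is that you are applying the recorded $p$-convexity/convexity correspondence to the function $h=P_{\left( f;p\right) }$ rather than to $f$ itself; since that correspondence holds for an arbitrary function on $[a,b]$, this creates no gap.
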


\begin{proof}
Let $f$ be a symmetrized $p$-convex function on the interval $[a,b]$. If we
take arbitrary $x,y\in g^{-1}\left( [a,b]\right) $, then there exist $u,v\in
\lbrack a,b]$ such that $x=u^{p}$ and $y=g(v)=v^{p}$ 
\begin{eqnarray}
&&\overset{\smile }{\left( f\circ g\right) }(tx+(1-t)y)  \label{1-1a} \\
&=&\frac{1}{2}\left[ \left( f\circ g\right) (tx+(1-t)y)+\left( f\circ
g\right) \left( a^{p}+b^{p}-tx-(1-t)y\right) \right]  \notag \\
&=&\frac{1}{2}\left[ \left( f\circ g\right) (tu^{p}+(1-t)v^{p})+\left(
f\circ g\right) \left( a^{p}+b^{p}-\left[ tu^{p}+(1-t)v^{p}\right] \right) %
\right]  \notag \\
&=&P_{\left( f;p\right) }(\left[ tu^{p}+(1-t)v^{p}\right] ^{1/p}).  \notag
\end{eqnarray}%
Since $f$ is a symmetrized $p$-convex function on the interval $[a,b],$ we
have%
\begin{equation}
P_{\left( f;p\right) }(\left[ tu^{p}+(1-t)v^{p}\right] ^{1/p})\leq
tP_{\left( f;p\right) }(u)+(1-t)P_{\left( f;p\right) }(v)  \label{1-1b}
\end{equation}%
\begin{eqnarray*}
&=&t\frac{1}{2}\left[ f(u)+f\left( \left[ a^{p}+b^{p}-u^{p}\right]
^{1/p}\right) \right] +(1-t)\frac{1}{2}\left[ f(v)+f\left( \left[
a^{p}+b^{p}-v^{p}\right] ^{1/p}\right) \right] \\
&=&t\frac{1}{2}\left[ \left( f\circ g\right) (x)+\left( f\circ g\right)
\left( a^{p}+b^{p}-x\right) \right] +(1-t)\frac{1}{2}\left[ \left( f\circ
g\right) (y)+\left( f\circ g\right) \left( a^{p}+b^{p}-v^{p}\right) \right]
\\
&=&\overset{\smile }{t\left( f\circ g\right) }(x)+(1-t)\overset{\smile }{%
\left( f\circ g\right) }(y)
\end{eqnarray*}%
By (\ref{1-1a}) and (\ref{1-1b}), we obtain that $f\circ g$ is symmetrized
convex on the interval $[a^{p},b^{p}]$ (or $[b^{p},a^{p}]$).

Conversely, if $f\circ g$ is symmetrized convex on the interval $%
[a^{p},b^{p}]$ (or $[b^{p},a^{p}]$) then it is easily seen that $f$ is
symmetrized $p$-convex on the interval $[a,b]$ by a similar procedure. The
details are omitted.
\end{proof}

\begin{theorem}
\label{T2-1}If $f:[a,b]\subseteq \left( 0,\infty \right) \mathbb{\rightarrow
R}$ is symmetrized $p$-convex on the interval $[a,b]$, then we have the
Hermite-Hadamard inequalities%
\begin{equation}
f\left( \left[ \frac{a^{p}+b^{p}}{2}\right] ^{1/p}\right) \leq \frac{p}{%
b^{p}-a^{p}}\int_{a}^{b}\frac{f(x)}{x^{1-p}}dx\leq \frac{f(a)+f(b)}{2}.
\label{2-1}
\end{equation}
\end{theorem}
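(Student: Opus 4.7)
The plan is to reduce the claim to Dragomir's Hermite--Hadamard inequality for ordinary symmetrized convex functions (Theorem \ref{T1-1}) via the substitution $g(x)=x^{1/p}$, exploiting Proposition \ref{P2-1}.

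First I would set $g(x)=x^{1/p}$ and let $I$ denote the interval $[a^{p},b^{p}]$ if $p>0$ and $[b^{p},a^{p}]$ if $p<0$. By Proposition \ref{P2-1}, since $f$ is symmetrized $p$-convex on $[a,b]$, the composition $f\circ g$ is symmetrized convex on $I$. Applying Theorem \ref{T1-1} to $f\circ g$ on $I$ yields, for the case $p>0$,
\begin{equation*}
(f\circ g)\!\left(\frac{a^{p}+b^{p}}{2}\right)\leq \frac{1}{b^{p}-a^{p}}\int_{a^{p}}^{b^{p}}(f\circ g)(t)\,dt\leq \frac{(f\circ g)(a^{p})+(f\circ g)(b^{p})}{2},
\end{equation*}
and the analogous statement with $[b^{p},a^{p}]$ and $1/(a^{p}-b^{p})$ when $p<0$. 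Note that $(f\circ g)(a^{p})=f(a)$, $(f\circ g)(b^{p})=f(b)$, and $(f\circ g)\!\left(\frac{a^{p}+b^{p}}{2}\right)=f\!\left(\left[\frac{a^{p}+b^{p}}{2}\right]^{1/p}\right)$, so the outer terms already match the desired inequality.

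Next I would transform the middle integral by the change of variables $t=x^{p}$, so that $dt=p\,x^{p-1}dx=\frac{p}{x^{1-p}}dx$ and $(f\circ g)(t)=f(x)$. For $p>0$, $t$ running from $a^{p}$ to $b^{p}$ corresponds to $x$ running from $a$ to $b$, giving
\begin{equation*}
\frac{1}{b^{p}-a^{p}}\int_{a^{p}}^{b^{p}}(f\circ g)(t)\,dt=\frac{p}{b^{p}-a^{p}}\int_{a}^{b}\frac{f(x)}{x^{1-p}}dx.
\end{equation*}
For $p<0$, the same substitution sends $t\in[b^{p},a^{p}]$ to $x\in[b,a]$; reversing the limits contributes an extra minus sign that cancels against $a^{p}-b^{p}=-(b^{p}-a^{p})$, so the middle expression once more equals $\frac{p}{b^{p}-a^{p}}\int_{a}^{b}\frac{f(x)}{x^{1-p}}dx$. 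Combining this with the previous display gives (\ref{2-1}).

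The only genuinely delicate point is the sign bookkeeping for $p<0$: both $b^{p}-a^{p}$ and the Jacobian factor $p$ are negative, and the substitution reverses the orientation of the interval, so three sign flips must be tracked simultaneously. Everything else is a direct translation through Proposition \ref{P2-1} and Theorem \ref{T1-1}.
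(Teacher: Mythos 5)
Your argument is correct, but it is not the route the paper's proof takes: the paper applies the Hermite--Hadamard inequality for $p$-convex functions (Theorem \ref{2.2}) directly to the $p$-symmetrical transform $P_{(f;p)}$, which is $p$-convex by definition of symmetrized $p$-convexity, and then checks that the three resulting quantities coincide with those in (\ref{2-1}): $P_{(f;p)}$ agrees with $f$ at the $p$-midpoint, $\tfrac{1}{2}[P_{(f;p)}(a)+P_{(f;p)}(b)]=\tfrac{1}{2}[f(a)+f(b)]$, and $\int_a^b P_{(f;p)}(x)x^{p-1}\,dx=\int_a^b f(x)x^{p-1}\,dx$ because the substitution $x\mapsto[a^p+b^p-x^p]^{1/p}$ preserves the measure $x^{p-1}dx$ on $[a,b]$. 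Your reduction through Proposition \ref{P2-1} and Dragomir's Theorem \ref{T1-1} is in fact exactly the alternative proof the author records in the remark immediately following Theorem \ref{T2-1}, so it is fully endorsed by the paper; your sign bookkeeping for $p<0$ (the two negative factors cancelling against the orientation reversal) is also handled correctly. The trade-off is that the paper's main proof stays entirely inside the $p$-convex framework and needs only the already-established Theorem \ref{2.2}, whereas your route outsources the analytic content to the classical symmetrized-convex case and concentrates all the work in the change of variables; both require the same implicit integrability hypothesis on $f$, which neither the theorem statement nor your proposal makes explicit.
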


\begin{proof}
Since $f:[a,b]\subseteq \left( 0,\infty \right) \mathbb{\rightarrow R}$ is
symmetrized $p$-convex on the interval $[a,b]$, then by writing the
Hermite-Hadamard inequality for the function $P_{\left( f;p\right) }(x)$ we
have%
\begin{equation}
P_{\left( f;p\right) }\left( \left[ \frac{a^{p}+b^{p}}{2}\right]
^{1/p}\right) \leq \frac{p}{b^{p}-a^{p}}\int_{a}^{b}\frac{P_{\left(
f;p\right) }(x)}{x^{1-p}}dx\leq \frac{P_{\left( f;p\right) }(a)+P_{\left(
f;p\right) }(b)}{2},  \label{2-1a}
\end{equation}%
where, it is easily seen that%
\begin{equation*}
P_{\left( f;p\right) }\left( \left[ \frac{a^{p}+b^{p}}{2}\right]
^{1/p}\right) =f\left( \left[ \frac{a^{p}+b^{p}}{2}\right] ^{1/p}\right) ,~%
\frac{P_{\left( f;p\right) }(a)+P_{\left( f;p\right) }(b)}{2}=\frac{f(a)+f(b)%
}{2},
\end{equation*}%
and%
\begin{equation*}
\frac{p}{b^{p}-a^{p}}\int_{a}^{b}\frac{P_{\left( f;p\right) }(x)}{x^{1-p}}dx=%
\frac{p}{b^{p}-a^{p}}\int_{a}^{b}\frac{f(x)}{x^{1-p}}dx
\end{equation*}%
Then by (\ref{2-1a}) we get required inequalities.
\end{proof}

\begin{remark}
In Theorem \ref{T2-1},

i.) if we choose $p=1$, then the inequalities (\ref{2-2}) reduces to the
inequalities (\ref{1-2}) in Theorem (\ref{T1-1}).

ii.) if we choose $p=-1$, then the inequalities (\ref{2-2}) reduces to the
inequalities (\ref{1-5}) in Theorem (\ref{T1-4}).
\end{remark}

\begin{remark}
By helping Theorem \ref{T1-1} and Proposition \ref{P2-1}, the proof of
Theorem \ref{T2-1} can also be given as follows :

Since $f:[a,b]\subseteq \left( 0,\infty \right) \mathbb{\rightarrow R}$ is
symmetrized $p$-convex on the interval $[a,b]$, $f\circ g$ is symmetrized
convex on the interval $[a^{p},b^{p}]$ (or $[b^{p},a^{p}]$) with $%
g(x)=x^{1/p},x>0,p\neq 0.$ So, by Theorem \ref{T1-1} we have%
\begin{equation*}
\left( f\circ g\right) \left( \frac{a^{p}+b^{p}}{2}\right) \leq \frac{1}{%
b^{p}-a^{p}}\int_{a^{p}}^{b^{p}}\left( f\circ g\right) (x)dx\leq \frac{%
\left( f\circ g\right) (a^{p})+\left( f\circ g\right) (b^{p})}{2},
\end{equation*}%
i.e.%
\begin{equation*}
f\left( \left[ \frac{a^{p}+b^{p}}{2}\right] ^{1/p}\right) \leq \frac{p}{%
b^{p}-a^{p}}\int_{a}^{b}\frac{f(x)}{x^{1-p}}dx\leq \frac{f(a)+f(b)}{2}.
\end{equation*}
\end{remark}

\begin{theorem}
\label{T2-2}If $f:[a,b]\subseteq \left( 0,\infty \right) \mathbb{\rightarrow
R}$ is symmetrized $p$-convex on the interval $[a,b]$. Then for any $x\in
\lbrack a,b]$ we have the bounds%
\begin{equation}
f\left( \left[ \frac{a^{p}+b^{p}}{2}\right] ^{1/p}\right) \leq P_{\left(
f;p\right) }\left( x\right) \leq \frac{f(a)+f(b)}{2}.  \label{2-2}
\end{equation}
\end{theorem}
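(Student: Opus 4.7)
The plan is to prove Theorem \ref{T2-2} directly from the $p$-convexity of $P_{(f;p)}$, which is the hypothesis of being symmetrized $p$-convex. The argument is the exact $p$-analogue of Dragomir's proof of Theorem \ref{T1-2}, and it rests on three elementary identities about $P_{(f;p)}$ that follow straight from the definition.

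First, I would collect the identities. A direct computation from the formula
$P_{(f;p)}(x)=\tfrac{1}{2}\left[f(x)+f\!\left([a^{p}+b^{p}-x^{p}]^{1/p}\right)\right]$
yields
(i) $P_{(f;p)}\!\left([a^{p}+b^{p}-x^{p}]^{1/p}\right)=P_{(f;p)}(x)$ for all $x\in[a,b]$, since replacing $x$ by $[a^{p}+b^{p}-x^{p}]^{1/p}$ permutes the two summands in the definition;
(ii) $P_{(f;p)}(a)=P_{(f;p)}(b)=\dfrac{f(a)+f(b)}{2}$, since $[a^{p}+b^{p}-a^{p}]^{1/p}=b$ and $[a^{p}+b^{p}-b^{p}]^{1/p}=a$;
(iii) $P_{(f;p)}\!\left(\left[\tfrac{a^{p}+b^{p}}{2}\right]^{1/p}\right)=f\!\left(\left[\tfrac{a^{p}+b^{p}}{2}\right]^{1/p}\right)$, because the point $[(a^{p}+b^{p})/2]^{1/p}$ is fixed by the involution $x\mapsto [a^{p}+b^{p}-x^{p}]^{1/p}$.

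For the lower bound in \eqref{2-2}, I would observe that $\tfrac{a^{p}+b^{p}}{2}=\tfrac{1}{2}x^{p}+\tfrac{1}{2}(a^{p}+b^{p}-x^{p})$, so applying the $p$-convexity of $P_{(f;p)}$ with $t=\tfrac{1}{2}$ to the points $x$ and $[a^{p}+b^{p}-x^{p}]^{1/p}$ gives
\begin{equation*}
P_{(f;p)}\!\left(\left[\tfrac{a^{p}+b^{p}}{2}\right]^{1/p}\right)\leq \tfrac{1}{2}P_{(f;p)}(x)+\tfrac{1}{2}P_{(f;p)}\!\left([a^{p}+b^{p}-x^{p}]^{1/p}\right)=P_{(f;p)}(x),
\end{equation*}
where the last equality is identity (i). Combining with identity (iii) yields $f\!\left([\tfrac{a^{p}+b^{p}}{2}]^{1/p}\right)\leq P_{(f;p)}(x)$.

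For the upper bound, given $x\in[a,b]$ I would note that, since $s\mapsto s^{p}$ is strictly monotone on $(0,\infty)$, there exists a unique $t\in[0,1]$ with $x^{p}=(1-t)a^{p}+tb^{p}$, i.e.\ $x=[(1-t)a^{p}+tb^{p}]^{1/p}$. The $p$-convexity of $P_{(f;p)}$ then gives
\begin{equation*}
P_{(f;p)}(x)\leq (1-t)P_{(f;p)}(a)+tP_{(f;p)}(b)=\tfrac{f(a)+f(b)}{2},
\end{equation*}
the last equality being identity (ii). There is no real obstacle here; the only points that deserve care are the sign of $p$ (handled uniformly since the formulas are symmetric in $a,b$ and in $p>0$ vs.\ $p<0$ via the substitution $g(x)=x^{1/p}$), and noting that identity (i) could equivalently be invoked by appealing to Proposition \ref{P2-1} and Theorem \ref{T1-2} applied to $f\circ g$ on $[a^{p},b^{p}]$ (or $[b^{p},a^{p}]$), which provides an alternative one-line derivation if a shorter proof is preferred.
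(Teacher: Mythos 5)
Your proof is correct and follows essentially the same route as the paper: the lower bound via $p$-convexity at $t=\tfrac12$ applied to $x$ and its reflection $[a^{p}+b^{p}-x^{p}]^{1/p}$ together with the invariance of $P_{(f;p)}$ under that reflection, and the upper bound by writing $x$ as a $p$-convex combination of $a$ and $b$ and using $P_{(f;p)}(a)=P_{(f;p)}(b)=\tfrac{f(a)+f(b)}{2}$. Your version merely makes explicit the three identities the paper uses implicitly.
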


\begin{proof}
Since$P_{\left( f;p\right) }$ is $p$-convex on $[a,b]$ then for any $x\in
\lbrack a,b]$ we have%
\begin{equation*}
f\left( \left[ \frac{a^{p}+b^{p}}{2}\right] ^{1/p}\right) =P_{\left(
f;p\right) }\left( \left[ \frac{a^{p}+b^{p}}{2}\right] ^{1/p}\right) \leq 
\frac{P_{\left( f;p\right) }(x)+P_{\left( f;p\right) }(\left[
a^{p}+b^{p}-x^{p}\right] ^{1/p})}{2}=P_{\left( f;p\right) }\left( x\right) .
\end{equation*}%
This give us the first inequality in (\ref{2-2}).

Also, for any $x\in \lbrack a,b]$ there exist a number $t_{0}\in \lbrack
0,1] $ such that $x=\left[ t_{0}a^{p}+(1-t_{0})b^{p}\right] ^{1/p}$. By the $%
p$-convexity of $P_{\left( f;p\right) }$ we have%
\begin{eqnarray*}
P_{\left( f;p\right) }\left( x\right) &\leq &t_{0}P_{\left( f;p\right)
}\left( a\right) +(1-t_{0})P_{\left( f;p\right) }\left( b\right) \\
&=&P_{\left( f;p\right) }\left( a\right) =\frac{f(a)+f(b)}{2}
\end{eqnarray*}

which gives the second inequality in (\ref{2-2}).
\end{proof}

\begin{remark}
In Theorem \ref{T2-2},

i.) if we choose $p=1$, then the inequalities (\ref{2-2}) reduces to the
inequalities (\ref{1-3}) in Theorem (\ref{T1-2}).

ii.) if we choose $p=-1$, then the inequalities (\ref{2-2}) reduces to the
inequalities (\ref{1-6}) in Theorem (\ref{T1-5}).
\end{remark}

\begin{remark}
By helping Theorem \ref{T1-2} and Proposition \ref{P2-1}, the proof of
Theorem \ref{T2-2} can also be given as follows :

Since $f:[a,b]\subseteq \left( 0,\infty \right) \mathbb{\rightarrow R}$ is
symmetrized $p$-convex on the interval $[a,b]$, $f\circ g$ is symmetrized
convex on the interval $[a^{p},b^{p}]$ with $g(x)=x^{1/p},x>0,p\neq 0$. So,
by Theorem \ref{T1-2} we have%
\begin{equation*}
\left( f\circ g\right) \left( \frac{a^{p}+b^{p}}{2}\right) \leq \overset{%
\smile }{\left( f\circ g\right) }(x^{p})\leq \frac{\left( f\circ g\right)
(a^{p})+\left( f\circ g\right) (b^{p})}{2},
\end{equation*}%
i.e.%
\begin{equation*}
f\left( \left[ \frac{a^{p}+b^{p}}{2}\right] ^{1/p}\right) \leq P_{\left(
f;p\right) }\left( x\right) \leq \frac{f(a)+f(b)}{2}
\end{equation*}%
for any $x\in \lbrack a,b]$.
\end{remark}

\begin{remark}
If $f:[a,b]\subseteq \left( 0,\infty \right) \mathbb{\rightarrow R}$ is
symmetrized $p$-convex on the interval $[a,b]$, then we have the bounds%
\begin{equation*}
\underset{x\in \lbrack a,b]}{\inf }P_{\left( f;p\right) }\left( x\right)
=f\left( \left[ \frac{a^{p}+b^{p}}{2}\right] ^{1/p}\right)
\end{equation*}%
and%
\begin{equation*}
\underset{x\in \lbrack a,b]}{\sup }P_{\left( f;p\right) }\left( x\right) =%
\frac{f(a)+f(b)}{2}.
\end{equation*}
\end{remark}

\begin{corollary}
\label{C2-1}If $f:[a,b]\subseteq \left( 0,\infty \right) \mathbb{\rightarrow
R}$ is symmetrized $p$-convex on the interval $[a,b]$ and $%
w:[a,b]\rightarrow \lbrack 0,\infty )$is integrable on $[a,b]$, then%
\begin{equation}
f\left( \left[ \frac{a^{p}+b^{p}}{2}\right] ^{1/p}\right) \int_{a}^{b}\frac{%
w(x)}{x^{1-p}}dx\leq \int_{a}^{b}\frac{w(x)P_{\left( f;p\right) }\left(
x\right) }{x^{1-p}}dx\leq \frac{f(a)+f(b)}{2}\int_{a}^{b}\frac{w(x)}{x^{1-p}}%
dx.  \label{2-3-1}
\end{equation}%
Moreover, if $w$ is $p$-symmetric with respect to $\left[ \frac{a^{p}+b^{p}}{%
2}\right] ^{1/p}$on $[a,b]$, i.e. $w(x)=w(\left[ a^{p}+b^{p}-x^{p}\right]
^{1/p})$ for all $x\in \lbrack a,b]$, then%
\begin{equation}
f\left( \left[ \frac{a^{p}+b^{p}}{2}\right] ^{1/p}\right) \int_{a}^{b}\frac{%
w(x)}{x^{1-p}}dx\leq \int_{a}^{b}\frac{w(x)f\left( x\right) }{x^{1-p}}dx\leq 
\frac{f(a)+f(b)}{2}\int_{a}^{b}\frac{w(x)}{x^{1-p}}dx.  \label{2-3-2}
\end{equation}
\end{corollary}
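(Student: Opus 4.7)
The plan is to obtain both parts of the corollary by integrating the pointwise bounds of Theorem \ref{T2-2} against the nonnegative weight $w(x)/x^{1-p}$. Theorem \ref{T2-2} yields
\[
f\left(\left[\frac{a^{p}+b^{p}}{2}\right]^{1/p}\right)\le P_{(f;p)}(x)\le \frac{f(a)+f(b)}{2}\qquad (x\in[a,b]).
\]
Since $a,b>0$, the factor $x^{p-1}=1/x^{1-p}$ is strictly positive on $[a,b]$, so multiplying these pointwise bounds by $w(x)/x^{1-p}\ge 0$ and integrating from $a$ to $b$ preserves the inequalities and delivers (\ref{2-3-1}) directly.

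For the moreover part (\ref{2-3-2}), I would show that under the $p$-symmetry assumption on $w$ the middle integral in (\ref{2-3-1}) equals $\int_a^b w(x)f(x)x^{p-1}\,dx$, which then substitutes into (\ref{2-3-1}) to give (\ref{2-3-2}). Expanding $P_{(f;p)}$ by definition, the middle integral splits as
\[
\int_a^b \frac{w(x)P_{(f;p)}(x)}{x^{1-p}}\,dx = \frac{1}{2}\int_a^b w(x)f(x)x^{p-1}\,dx + \frac{1}{2}I,
\]
where $I=\int_a^b w(x)\, f\!\left(\left[a^{p}+b^{p}-x^{p}\right]^{1/p}\right) x^{p-1}\,dx$. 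To handle $I$, I apply the involutive substitution $u=[a^p+b^p-x^p]^{1/p}$, which swaps the endpoints $a\leftrightarrow b$ and satisfies $u^{p-1}\,du=-x^{p-1}\,dx$; the minus sign is absorbed by the orientation reversal. Invoking the $p$-symmetry $w([a^p+b^p-u^p]^{1/p})=w(u)$ then identifies $I$ with $\int_a^b w(u)f(u)u^{p-1}\,du$, and the two halves combine to give the desired identity.

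The only point requiring care is that the substitution behaves uniformly for both signs of $p$: this is fine because $x\mapsto [a^p+b^p-x^p]^{1/p}$ is a smooth strictly decreasing involution of $[a,b]$ for every $p\ne 0$ and $a,b\in(0,\infty)$. I anticipate no genuine obstacle, since the essential content is just the pointwise Theorem \ref{T2-2} integrated against a nonnegative density, upgraded in the symmetric case by folding the integral along the involution $x\mapsto [a^p+b^p-x^p]^{1/p}$.
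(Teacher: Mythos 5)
Your proposal is correct and follows essentially the same route as the paper: inequality (\ref{2-3-1}) is obtained by multiplying the pointwise bounds (\ref{2-2}) of Theorem \ref{T2-2} by $w(x)/x^{1-p}\geq 0$ and integrating, and (\ref{2-3-2}) then follows from the change of variable $u=\left[ a^{p}+b^{p}-x^{p}\right] ^{1/p}$ together with the $p$-symmetry of $w$, exactly as in the paper's proof.
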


\begin{proof}
The inequality (\ref{2-3-1}) follows by (\ref{2-2}) multiplying by $%
w(x)/x^{1-p}\geq 0$ and integrating over $x$ on $[a,b]$.

By changing the variable, we have

\begin{equation*}
\int_{a}^{b}\frac{w(x)f\left( \left[ a^{p}+b^{p}-x^{p}\right] ^{1/p}\right) 
}{x^{1-p}}dx=\int_{a}^{b}\frac{w(\left[ a^{p}+b^{p}-x^{p}\right]
^{1/p})f\left( x\right) }{x^{1-p}}dx.
\end{equation*}%
Since $w$ is $p$-symmetric with respect to $\left[ \frac{a^{p}+b^{p}}{2}%
\right] ^{1/p}$, then%
\begin{equation*}
\int_{a}^{b}\frac{w(\left[ a^{p}+b^{p}-x^{p}\right] ^{1/p})f\left( x\right) 
}{x^{1-p}}dx=\int_{a}^{b}\frac{w(x)f\left( x\right) }{x^{1-p}}dx.
\end{equation*}%
Thus%
\begin{eqnarray*}
\int_{a}^{b}\frac{w(x)P_{\left( f;p\right) }\left( x\right) }{x^{1-p}}dx &=&%
\frac{1}{2}\left[ \int_{a}^{b}\frac{w(x)f\left( x\right) }{x^{1-p}}%
dx+\int_{a}^{b}\frac{w(x)f\left( \left[ a^{p}+b^{p}-x^{p}\right]
^{1/p}\right) }{x^{1-p}}dx\right] \\
&=&\int_{a}^{b}\frac{w(x)f\left( x\right) }{x^{1-p}}dx
\end{eqnarray*}%
and by (\ref{2-3-1}) we get (\ref{2-3-2}).
\end{proof}

\begin{remark}
The inequality (\ref{2-3-2}) is known as weighted generalization of
Hermite-Hadamard inequality for $p$-convex functions (it is also given in
Theorem \ref{T-0}). It has been shown now that this inequality remains valid
for the larger class of symmetrized $p$-convex functions $f$ on the interval 
$[a,b]$.
\end{remark}

\begin{remark}
We note that by helping Corollary \ref{C1-1} and Proposition \ref{P2-1}, the
proof of Corollary \ref{C2-1} can also be given. The details is omitted.
\end{remark}

\begin{remark}
Let $a,b,\alpha \in 
\mathbb{R}
$ with $0<a<b$ and $\alpha \geq 2$. Then the function $f:\left[ a,b\right]
\rightarrow 
\mathbb{R}
,~f(x)=\left( x^{p}-a^{p}\right) ^{\alpha -1},$ $p>0,$ is symmetrized $p$%
-convex on $\left[ a,b\right] $

i.) If we consider the function 
\begin{equation*}
f(x)=\left( x^{p}-a^{p}\right) ^{\alpha -1}
\end{equation*}%
which is symmetrized $p$-convex on $\left[ a,b\right] $ in the inequality (%
\ref{2-3-1}), then we have%
\begin{equation*}
\frac{1}{2^{\alpha -1}}\int_{a}^{b}\frac{w(x)}{x^{1-p}}dx\leq \frac{\Gamma
(\alpha )}{2p\left( b^{p}-a^{p}\right) ^{\alpha -1}}\left[
J_{a^{p}+}^{\alpha }\left( w\circ g\right) (b^{p})+J_{b^{p}-}^{\alpha
}\left( w\circ g\right) (a^{p})\right] \leq \frac{1}{2}\int_{a}^{b}\frac{w(x)%
}{x^{1-p}}dx
\end{equation*}%
for any $w:[a,b]\rightarrow \lbrack 0,\infty )$ is integrable on $[a,b]$
with $g(x)=x^{1/p},x\in \left[ a^{p},b^{p}\right] .$

ii.) If we consider the function 
\begin{equation*}
w(x)=\left( x^{p}-a^{p}\right) ^{\alpha -1}+\left( b^{p}-x^{p}\right)
^{\alpha -1}
\end{equation*}%
which is $p$-symmetric with respect to $\left[ \frac{a^{p}+b^{p}}{2}\right]
^{1/p}$ in the inequality (\ref{2-3-2}), then we have the following
inequalities 
\begin{equation*}
f\left( \left[ \frac{a^{p}+b^{p}}{2}\right] ^{1/p}\right) \leq \frac{\Gamma
(\alpha +1)}{2\left( b^{p}-a^{p}\right) ^{\alpha }}\left[ J_{a^{p}+}^{\alpha
}\left( f\circ g\right) (b^{p})+J_{b^{p}-}^{\alpha }\left( f\circ g\right)
(a^{p})\right] \leq \frac{f(a)+f(b)}{2},
\end{equation*}
where $g(x)=x^{1/p},x\in \left[ a^{p},b^{p}\right] .$

iii.) Let $\varphi $ be $p$-symmetric with respect to $\left[ \frac{%
a^{p}+b^{p}}{2}\right] ^{1/p}.$ If we consider the function 
\begin{equation*}
w(x)=\left[ \left( x^{p}-a^{p}\right) ^{\alpha -1}+\left( b^{p}-x^{p}\right)
^{\alpha -1}\right] \varphi (x)
\end{equation*}%
which is $p$-symmetric with respect to $\left[ \frac{a^{p}+b^{p}}{2}\right]
^{1/p}$ in the inequality (\ref{2-3-2}), then we have the following
inequalities%
\begin{eqnarray*}
&&f\left( \left[ \frac{a^{p}+b^{p}}{2}\right] ^{1/p}\right) \left[
J_{a^{p}+}^{\alpha }\left( \varphi \circ g\right) (b^{p})+J_{b^{p}-}^{\alpha
}\left( \varphi \circ g\right) (a^{p})\right] \\
&\leq &\left[ J_{a^{p}+}^{\alpha }\left( f\varphi \circ g\right)
(b^{p})+J_{b^{p}-}^{\alpha }\left( f\varphi \circ g\right) (a^{p})\right] \\
&\leq &\frac{f(a)+f(b)}{2}\left[ J_{a^{p}+}^{\alpha }\left( \varphi \circ
g\right) (b^{p})+J_{b^{p}-}^{\alpha }\left( \varphi \circ g\right) (a^{p})%
\right]
\end{eqnarray*}%
which are the same of inequalities in (\ref{0-6}). Where $g(x)=x^{1/p},x\in %
\left[ a^{p},b^{p}\right] .$
\end{remark}

\begin{theorem}
\label{T2-3}Assume that $f:[a,b]\subseteq \left( 0,\infty \right)
\rightarrow 
\mathbb{R}
$ is symmetrized $p$-convex on the interval $[a,b]$ with $p\in 
\mathbb{R}
\diagdown \left\{ 0\right\} .$ Then for any $x,y\in \lbrack a,b]$ with $%
x\neq y$ we have the Hermite-Hadamard inequalities%
\begin{eqnarray}
&&\frac{1}{2}\left[ f(\left[ \frac{x^{p}+y^{p}}{2}\right] ^{1/p})+f\left( %
\left[ a^{p}+b^{p}-\frac{x^{p}+y^{p}}{2}\right] ^{1/p}\right) \right]
\label{2-4} \\
&\leq &\frac{p}{2\left( y^{p}-x^{p}\right) }\left[ \int_{x}^{y}\frac{f(t)}{%
t^{1-p}}dt+\int_{\left[ a^{p}+b^{p}-y^{p}\right] ^{1/p}}^{\left[
a^{p}+b^{p}-x^{p}\right] ^{1/p}}\frac{f(t)}{t^{1-p}}dt\right]  \notag \\
&\leq &\frac{1}{4}\left[ f(x)+f(y)+f\left( \left[ a^{p}+b^{p}-x^{p}\right]
^{1/p}\right) +f\left( \left[ a^{p}+b^{p}-y^{p}\right] ^{1/p}\right) \right]
.  \notag
\end{eqnarray}
\end{theorem}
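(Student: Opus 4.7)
The plan is to apply the basic Hermite--Hadamard inequality for $p$-convex functions (Theorem \ref{2.2}) to the $p$-symmetric transform $P_{(f;p)}$, not on $[a,b]$ itself but on the subinterval $[x,y]\subseteq[a,b]$, and then translate the resulting three-term chain into the desired inequality for $f$ by expanding the definition of $P_{(f;p)}$ and performing a single change of variable in the middle integral.

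Without loss of generality I would assume $x<y$, since swapping $x$ and $y$ reverses the sign of $y^p-x^p$ and the orientation of both integrals in \eqref{2-4}, leaving the statement invariant. Because $f$ is symmetrized $p$-convex on $[a,b]$, the transform $P_{(f;p)}$ is $p$-convex on $[a,b]$, and $p$-convexity restricts to every subinterval, so $P_{(f;p)}$ is $p$-convex on $[x,y]$. Applying Theorem \ref{2.2} to $P_{(f;p)}$ on $[x,y]$ then yields
\begin{equation*}
P_{(f;p)}\!\left(\left[\tfrac{x^p+y^p}{2}\right]^{1/p}\right) \leq \frac{p}{y^p-x^p}\int_x^y \frac{P_{(f;p)}(t)}{t^{1-p}}\,dt \leq \frac{P_{(f;p)}(x)+P_{(f;p)}(y)}{2}.
\end{equation*}
Expanding $P_{(f;p)}$ on the outer two terms produces exactly the first and third lines of \eqref{2-4}, so no further work is needed there.

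The substantive step is rewriting the middle integral. Splitting $\int_x^y P_{(f;p)}(t)/t^{1-p}\,dt$ according to the definition of $P_{(f;p)}$ gives two pieces; on the second piece I would substitute $u=[a^p+b^p-t^p]^{1/p}$, so that $u^{p-1}\,du=-t^{p-1}\,dt$ and the endpoints $t=x,y$ map to $u=[a^p+b^p-x^p]^{1/p}$ and $u=[a^p+b^p-y^p]^{1/p}$ respectively. Exchanging the limits to absorb the minus sign converts the second piece into $\int_{[a^p+b^p-y^p]^{1/p}}^{[a^p+b^p-x^p]^{1/p}} f(u)/u^{1-p}\,du$, which matches the middle term of \eqref{2-4} exactly. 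The main thing to monitor is sign bookkeeping when $p<0$, where both $y^p-x^p$ and the Jacobian of the substitution reverse sign, and one must check that these reversals cancel so that the displayed inequality retains the correct orientation. A cleaner alternative that sidesteps the substitution is to invoke Proposition \ref{P2-1} and apply Theorem \ref{T1-3} directly to $f\circ g$ with $g(t)=t^{1/p}$ on the subinterval of $[a^p,b^p]$ (or $[b^p,a^p]$) whose endpoints are $x^p$ and $y^p$, then translate back via $g$.
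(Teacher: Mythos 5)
Your proposal is correct and follows essentially the same route as the paper: apply the Hermite--Hadamard inequality for $p$-convex functions to $P_{(f;p),[a,b]}$ on the subinterval $[x,y]$, expand the definition of $P_{(f;p)}$ in the outer terms, and convert the second half of the middle integral via the substitution $u=\left[a^{p}+b^{p}-t^{p}\right]^{1/p}$. The alternative you mention (via Proposition \ref{P2-1} and Theorem \ref{T1-3}) is also exactly what the paper notes in a subsequent remark.
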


\begin{proof}
Since $P_{\left( f;p\right) ,[a,b]}$ is $p$-convex on $[a,b]$, then $%
P_{\left( f;p\right) ,[a,b]}$ is also $p$-convex on any subinterval $[x,y]$
(or $[y,x]$) where $x,y\in \lbrack a,b].$

By Hermite-Hadamard inequalities for convex functions we have%
\begin{equation}
P_{\left( f;p\right) ,[a,b]}\left( \left[ \frac{x^{p}+y^{p}}{2}\right]
^{1/p}\right) \leq \frac{p}{y^{p}-x^{p}}\int_{x}^{y}\frac{P_{\left(
f;p\right) ,[a,b]}(t)}{t^{1-p}}dt\leq \frac{P_{\left( f;p\right)
,[a,b]}(x)+P_{\left( f;p\right) ,[a,b]}(y)}{2}  \label{2-4-a}
\end{equation}%
for any $x,y\in \lbrack a,b]$ with $x\neq y$.

By definition of $P_{\left( f;p\right) }$, we have%
\begin{equation*}
P_{\left( f;p\right) ,[a,b]}\left( \left[ \frac{x^{p}+y^{p}}{2}\right]
^{1/p}\right) =\frac{1}{2}\left[ f(\left[ \frac{x^{p}+y^{p}}{2}\right]
^{1/p})+f\left( \left[ a^{p}+b^{p}-\frac{x^{p}+y^{p}}{2}\right]
^{1/p}\right) \right] ,
\end{equation*}%
\begin{eqnarray*}
\int_{x}^{y}\frac{P_{\left( f;p\right) ,[a,b]}(t)}{t^{1-p}}dt &=&\frac{1}{2}%
\int_{x}^{y}\frac{1}{t^{1-p}}\left[ f(t)+f\left( \left[ a^{p}+b^{p}-t^{p}%
\right] ^{1/p}\right) \right] dt \\
&=&\frac{1}{2}\int_{x}^{y}\frac{f(t)}{t^{1-p}}dt+\frac{1}{2}\int_{x}^{y}%
\frac{f\left( \left[ a^{p}+b^{p}-t^{p}\right] ^{1/p}\right) }{t^{1-p}}dt \\
&=&\frac{1}{2}\int_{x}^{y}\frac{f(t)}{t^{1-p}}dt+\frac{1}{2}\int_{\left[
a^{p}+b^{p}-y^{p}\right] ^{1/p}}^{\left[ a^{p}+b^{p}-x^{p}\right] ^{1/p}}%
\frac{f(t)}{t^{1-p}}dt
\end{eqnarray*}%
and%
\begin{eqnarray*}
&&\frac{P_{\left( f;p\right) ,[a,b]}(x)+P_{\left( f;p\right) ,[a,b]}(y)}{2}
\\
&=&\frac{1}{4}\left[ f(x)+f(y)+f\left( \left[ a^{p}+b^{p}-x^{p}\right]
^{1/p}\right) +f\left( \left[ a^{p}+b^{p}-y^{p}\right] ^{1/p}\right) \right]
.
\end{eqnarray*}%
Thus by (\ref{2-4-a}) we obtain the desired result (\ref{2-4}).
\end{proof}

\begin{remark}
We note that by helping Theorem \ref{T1-3} and Proposition \ref{P2-1}, the
proof of Theorem \ref{T2-3} can also be given. The details is omitted.
\end{remark}

\begin{remark}
If we take $x=a$ and $y=b$ in (\ref{2-4}), then we get (\ref{2-1}). If, for
a given $x\in \lbrack a,b]$, we take $y=\left[ a^{p}+b^{p}-x^{p}\right]
^{1/p}$, then from (\ref{2-4}) we get%
\begin{equation}
f\left( \left[ \frac{a^{p}+b^{p}}{2}\right] ^{1/p}\right) \leq \frac{p}{%
a^{p}+b^{p}-2x^{p}}\int_{x}^{\left[ a^{p}+b^{p}-x^{p}\right] ^{1/p}}\frac{%
f(t)}{t^{1-p}}dt\leq \frac{1}{2}\left[ f(x)+f\left( \left[ a^{p}+b^{p}-x^{p}%
\right] ^{1/p}\right) \right] ,  \label{2-4b}
\end{equation}%
where $x\neq \left[ \frac{a^{p}+b^{p}}{2}\right] ^{1/p}$, provided that $%
f:[a,b]\subseteq \left( 0,\infty \right) \rightarrow 
\mathbb{R}
$ is symmetrized $p$-convex on the interval $[a,b]$.

Multiplying the inequalities (\ref{2-4b}) by $\frac{1}{x^{1-p}}$, then
integrating the resulting inequality over x we get the following refinement
of the first part of (\ref{2-1})%
\begin{eqnarray*}
&&f\left( \left[ \frac{a^{p}+b^{p}}{2}\right] ^{1/p}\right) \\
&\leq &\frac{p^{2}}{\left( b^{p}-a^{p}\right) }\int_{a}^{b}\left[ \frac{1}{%
x^{1-p}\left( a^{p}+b^{p}-2x^{p}\right) }\int_{x}^{\left[ a^{p}+b^{p}-x^{p}%
\right] ^{1/p}}\frac{f(t)}{t^{1-p}}dt\right] dx \\
&\leq &\frac{p}{b^{p}-a^{p}}\int_{a}^{b}\frac{f(x)}{x^{1-p}}dx,
\end{eqnarray*}%
provided that $f:[a,b]\subseteq \left( 0,\infty \right) \rightarrow 
\mathbb{R}
$ is symmetrized $p$-convex on the interval $[a,b]$.
\end{remark}

When the function is $p$-convex, we have the following inequalities as well:

\begin{remark}
If $f:[a,b]\subseteq \left( 0,\infty \right) \rightarrow 
\mathbb{R}
$ is $p$-convex, then from (\ref{2-4}) we have the inequalities%
\begin{eqnarray}
&&f\left( \left[ \frac{a^{p}+b^{p}}{2}\right] ^{1/p}\right)  \label{2-4c} \\
&\leq &\frac{1}{2}\left[ f\left( \left[ \frac{x^{p}+y^{p}}{2}\right]
^{1/p}\right) +f\left( \left[ a^{p}+b^{p}-\frac{x^{p}+y^{p}}{2}\right]
^{1/p}\right) \right]  \notag \\
&\leq &\frac{p}{2\left( y^{p}-x^{p}\right) }\left[ \int_{x}^{y}\frac{f(t)}{%
t^{1-p}}dt+\int_{\left[ a^{p}+b^{p}-y^{p}\right] ^{1/p}}^{\left[
a^{p}+b^{p}-x^{p}\right] ^{1/p}}\frac{f(t)}{t^{1-p}}dt\right]  \notag \\
&\leq &\frac{1}{4}\left[ f(x)+f(y)+f\left( \left[ a^{p}+b^{p}-x^{p}\right]
^{1/p}\right) +f\left( \left[ a^{p}+b^{p}-y^{p}\right] ^{1/p}\right) \right]
,  \notag
\end{eqnarray}%
for any $x,y\in \lbrack a,b]$ with $x\neq y.$

If we multiply the inequalities(\ref{2-4c}) by $\frac{1}{\left( xy\right)
^{1-p}}$ and integrate (\ref{2-4c}) over $(x,y)$ on the square $[a,b]^{2}$
and divide by $\frac{p^{2}}{\left( b^{p}-a^{p}\right) ^{2}}$ , then we get
the following refinement of the first Hermite-Hadamard inequality for $p$%
-convex functions%
\begin{eqnarray*}
&&f\left( \left[ \frac{a^{p}+b^{p}}{2}\right] ^{1/p}\right) \\
&\leq &\frac{p^{2}}{2\left( b^{p}-a^{p}\right) ^{2}}\left[
\int_{a}^{b}\int_{a}^{b}\frac{f\left( \left[ \frac{x^{p}+y^{p}}{2}\right]
^{1/p}\right) }{\left( xy\right) ^{1-p}}dxdy+\int_{a}^{b}\int_{a}^{b}\frac{%
f\left( \left[ a^{p}+b^{p}-\frac{x^{p}+y^{p}}{2}\right] ^{1/p}\right) }{%
\left( xy\right) ^{1-p}}dxdy\right] \\
&\leq &\frac{p^{2}}{2\left( b^{p}-a^{p}\right) ^{2}}\int_{a}^{b}\int_{a}^{b}%
\frac{1}{\left( xy\right) ^{1-p}\left( y^{p}-x^{p}\right) }\left[
\int_{x}^{y}\frac{f(t)}{t^{1-p}}dt+\int_{\left[ a^{p}+b^{p}-y^{p}\right]
^{1/p}}^{\left[ a^{p}+b^{p}-x^{p}\right] ^{1/p}}\frac{f(t)}{t^{1-p}}dt\right]
dxdy \\
&\leq &\frac{p}{\left( b^{p}-a^{p}\right) }\int_{a}^{b}\frac{f(x)}{x^{1-p}}%
dx.
\end{eqnarray*}
\end{remark}

\begin{remark}
In Theorem \ref{T2-3},

i.) if we choose $p=1$, then the inequalities (\ref{2-4}) reduces to the
inequalities (\ref{1-4}) in Theorem (\ref{T1-3}).

ii.) if we choose $p=-1$, then the inequalities (\ref{2-4}) reduces to the
inequalities (\ref{1-7}) in Theorem (\ref{T1-6}).
\end{remark}

\end{document}